\newcommand{\mylabel}[2]{#2\def\@currentlabel{#2}\label{#1}}
\def\eqref{\@ifstar\@eqref\@@eqref}
\def\@eqref#1{\textup{\tagform@{\ref*{#1}}}}
\def\@@eqref#1{\textup{\tagform@{\ref{#1}}}}
\newtheorem{theorem}{Theorem}[section]
\newtheorem{lemma}[theorem]{Lemma}
\newtheorem{proposition}[theorem]{Proposition}
\newtheorem{corollary}[theorem]{Corollary}
\newtheorem{claim}[theorem]{Claim}
\crefname{figure}{Figure}{figures}
\theoremstyle{remark}
\newtheorem{remark}{Remark}[section]
\theoremstyle{definition}
\newtheorem{definition}{Definition}[section]
\crefname{proposition}{Proposition}{propositions}
\crefname{lemma}{Lemma}{Lemmas}
\crefname{proposition}{Proposition}{propositions}
\crefname{definition}{Definition}{definitions}
\tikzstyle{int}=[draw,minimum size=2em,text centered,text width=3cm]
\tikzstyle{sum}=[draw,shape=circle,inner sep=2pt,text centered,node distance=3.5cm]
\tikzstyle{summ}=[drawshape=circle,inner sep=4pt,text centered,node distance=3.cm]
\def\S{\mathcal{S}}
\def\mA{\mathbf{A}}
\def\mB{\mathbf{B}}
\def\mC{\mathbf{C}}
\def\va{\mathbf{a}}
\def\vb{\mathbf{b}}
\def\vr{\mathbf{r}}
\def\mF{\mathbf{F}}
\def\mG{\mathbf{G}}
\def\mU{\mathbf{U}}
\def\mV{\mathbf{V}}
\def\mW{\mathbf{W}}
\def\mZ{\mathbf{Z}}
\def\mQ{\mathbf{Q}}
\def\Real{\mathbb{R}}
\def\mS{\mathbf{S}}
\def\pr{\mathbf{Pr}}
\def\nor{\mathcal{N}}
\def\vx{\mathbf{x}}
\def\vy{\mathbf{y}}
\def\vz{\mathbf{z}}
\def\sY{\bm{\mathcal{Y}}}
\def\sP{\bm{\mathcal{P}}}
\def\vf{\mathbf{f}}
\def\vg{\mathbf{g}}
\def\E{\mathbb{E}}
\def\exp{{\text{exp}}}
\def\ex{\mathbb{E}}
\def\eps{\varepsilon}
\def\mX{\mathbf{X}}
\def\mY{\mathbf{Y}}
\def\K{\bm{\mathcal{K}}}
\def\AK{\mA\K}
\def\AKb{\overline{\mA\K}}
\def\sT{\bm{\mathcal{T}}}
\def\W{\mathbb{W}}
\def\S{\bm{\mathcal{S}}}
\def\id{\mathbf{I}}
\def\mG{\mathbf{G}}
\def\mZ{\mathbf{Z}}
\def\bxi{{\bm{\xi}}}
\def\Wb{\mathbb{W}}
\def\C{\bm{\mathcal{C}}}
\def\veta{\bm{\eta}}
\def\vphi{\bm{\phi}}
\def\mPhi{\bm{\Phi}}
\def\vsig{\bm{\sigma}}
\def\mM{\mathbf{M}}
\def\sbY{\overline{\bm{\mathcal{Y}}}}
\def\mSig{\bm{\Sigma}}
\def\bern{\text{Ber}}
\def\O{\mathcal{O}}
\def\conv{\text{conv}}
\def\diam{\text{diam}}
\begin{document}
\title{Tensor-structured sketching for constrained least squares%
%\thanks{}
}

\author{Ke Chen\thanks{Department of Mathematics, University of Texas, Austin, TX }
\quad and Ruhui Jin\footnotemark[1]}

\maketitle
\begin{abstract}
Constrained least squares problems arise in many applications. Their memory and computation costs are expensive in practice involving high-dimensional input data. We employ the so-called ``sketching'' strategy to project the least squares problem onto a space of a much lower ``sketching dimension'' via a random sketching matrix. The key idea of sketching is to reduce the dimension of the problem as much as possible while maintaining the approximation accuracy.

Tensor structure is often present in the data matrices of least squares, including linearized inverse problems and tensor decompositions. In this work, we utilize a general class of row-wise tensorized sub-Gaussian matrices as sketching matrices in constrained optimizations for the sketching design's compatibility with tensor structures. We provide theoretical guarantees on the sketching dimension in terms of error criterion and probability failure rate. In the context of unconstrained linear regressions, we obtain an optimal estimate for the sketching dimension. For optimization problems with general constraint sets, we show that the sketching dimension depends on a statistical complexity that characterizes the geometry of the underlying problems. Our theories are demonstrated in a few concrete examples, including unconstrained linear regression and sparse recovery problems.
\end{abstract}

\section{Introduction} 
Constrained optimization plays an important role in the intersection of machine learning \cite{BCN18}, computational mathematics \cite{NW06}, theoretical computer science \cite{B15}, and many other fields. We consider the least squares problem of the following form:
\begin{equation}
\label{QP}
\vx^*:= \arg\min_{\vx\in\C}\|\mA\vx-\vb\|_2^2,
\end{equation}
where $\C\subset \Real^p$ is the convex constraint set, and $\mA\in\Real^{n\times p}$ and $\vb\in \Real^n$ are respectively the coefficient matrix and vector. When the optimal solutions are not unique, we denote $\vx^*\in\Real^p$ as one of the minimizers.

A naive approach to solve this over-determined problem \eqref{QP} $(m \ll n)$ requires polynomial time in the ambient dimension $n$, which is not ideal as $n$ is remarkably high in large-scale optimization settings. \emph{Sketching} is a leading alternative to approximate a high-dimensional system with lower-dimensionional representations for less data memory, storage, and computation complexity. It introduces a random matrix $\mS$ of size $m \times n$, called the sketching matrix, to reduce the original program \eqref{QP} in a smaller problem:
\begin{equation}
\label{SQP}
\hat{\vx}:=\arg\min_{\vx\in\C}\|(\mS\mA)\vx-(\mS\vb)\|_2^2,
\end{equation}
where the sketched coefficients $\mS\mA, \mS\vb$ are merely of dimension $m$ rather than $n.$ 

In terms of error guarantees, we consider the following prediction error
\begin{equation}
\label{eqn:err}
\|\mA\hat{\vx}-\vb\|_2^2 \leq (1+ \eps)^2\|\mA\vx^*-\vb\|_2^2\,,
\end{equation}
with high probability for a pre-specified error criterion $\eps\in (0,1)$.
It is an interesting question how to design a sketching matrix $\mS$ with limited sketching dimension $m$ that can achieve small prediction error in the sense of \eqref{eqn:err}. This question has been well investigated in a line of work \cite{DMM06, RT08, MM13, CW13, ANW14, PW15, PW16} based on common dimensionality reduction methods like CountSketch \cite{CCC04}, sparse $0$-$1$ matrices \cite{DKS10}, Gaussian and sub-Gaussian matrices \cite{RV08, MPT07}, and FFT-based fast constructions \cite{AC06,T11}.

The coefficient data $\mA,\vb$ admit multi-linear (tensor) structures in many applications including spatio-temporal data analysis \cite{ BYL14, H15}, higher-order tensor decompositions \cite{DDV00, KB09}, approximating polynomial kernels \cite{KK12, PP13}, linearized PDE inverse problems \cite{CLNW19, JCCL19} and so on. Particularly, we focus on the data matrix $\mA\in\Real^{n_1n_2\times r}$ whose columns have tensor structure. 
In the sketching setting, we can utilize such structure in the original objective function to speed up forming the sketched problem \eqref{SQP} by designing sketching matrices with a corresponding tensor structure. In this paper, we study the sketching matrix $\mS\in\Real^{m \times n_1n_2}$ whose rows are tensor products of sub-Gaussian vectors. This design is natural to process the tensor data in the program due to the distributive property shown later in \eqref{distributive}. In particular, the cost of computing $\mS\mA$ drops significantly to $\O(m\, (n_1+n_2)\, p),$ from the cost $\O(m \,n_1n_2\, p)$ by a standard sub-Gaussian sketching. Previous works that share similar set-ups are \cite{SGTU18, CLNW19, RR20}, to which we will provide the details of comparisons in later discussions. Moreover, the input data $\mA, \vb$ have sparsity pattern in many practical situations. To reduce the computational complexity, we construct the sketching matrix with only a portion of nonzero entries. Specifically, we introduce a density level parameter $q \in (0,1)$ such that each random variable in the tensor components of $\mS$ is drawn to be zero with probability $1-q$. The computation cost further drops to $\O(m\, q^2\,(n_1+n_2)\, p).$

Apart from the row-wise tensorized sketches, there are other multi-linear random projections and sampling strategies that work well in practice.  
For sketching constructions employing fast matrix-vector multiplications, Pagh et al. \cite{P11, PP13} develop and analyze the TensorSketch method which uses fast Fourier transform (FFT) and CountSketch \cite{CCC04} techniques. This method is efficient while being applied to Kronecker products of vectors in the context of kernel machines. Later on, \cite{DSSW18} provides applications of TensorSketch to Kronecker product regressions as well as multimodal $p$-spline tensor sketching. 
Another line of works is represented by \cite{JKW19, MB19}. The authors respectively consider the so-called Kronecker FJLT. The analysis in both papers focuses on the subspace embedding property which can be seen as a stepping stone for sketching linear regressions. 
In the importance sampling regime, an efficient sublinear algorithm is provided by \cite{CPLP16} to sample the tensor CP alternating least squares (CP-ALS) problem \cite{KB09} by estimating the statistical leverage scores. Sparse sketching techniques are studied in \cite{LHW17} for low rank tensor CP and Tucker decompositions. 
For computational advantage from the tensor structure in the sketching matrices, please see the works \cite{CLNW19,PP13,JKW19} for details.
 
In the analysis of \cite{SGTU18, CLNW19, RR20, JKW19, MB19}, the authors rely on the Johnson-Lindenstrauss property \cite{JL84} to embed pairwise distances and derive concentrations. While such strategy is easy to apply and powerful for subspace embedding, it falls short of capturing the essential dimension of a subset and thus is suboptimal in most constrained optimization problems. Recently, Pilanci et al. \cite{PW15,PW16} obtain sharp guarantees for constrained convex programs via the Gaussian width of the constraint set. 
Another close prior work done by Bourgain et al. \cite{BDN15} focuses on the sparse JLT designed with a fixed number of non-zero entries per column. The authors develop fundamental analysis and discuss the relation between sparsity and embedding error for such class of sparse matrices, also using the Gaussian width parameter. 
This complexity parameter is capable of providing sharper bound via fine geometric argument. 

\subsection{Our contributions}
The current work aims to capture the essential geometry of the constraint set in \eqref{QP} and gain computational advantage from the tensor structure of the sketching matrix at the same time. 
Our main contributions have two components: for unconstrained linear regressions, we give a theoretical guarantee via an optimal Johnson-Lindenstrauss property for the row-wise tensorized sub-Gaussian sketches. For least squares with any choice of convex constraint sets, we adopt a variant measure of Gaussian width and provide an estimate on the sketching dimension for tensor-structured sketching matrices. This can be considered as a generalization of prior works \cite{MPT07,PW15,BDN15} where only unstructured sub-Gaussian sketching matrices are considered. To the best of our knowledge, this paper presents the state-of-the-art sketching dimensions for the row-wise tensor sketching matrices. 

For the sake of simplicity of the exposition, we first present the main result \cref{informal} for the sketching design constructed with row-wise tensorized Rademacher vectors \footnote{A random Rademacher vector has i.i.d. entries which take the values $-1, 1$ with equal probability $1/2.$} in the example of unconstrained linear regression problems. We would like to remark that more comprehensive results will be later shown in \cref{unconstrained} for a wide class of sub-Gaussian matrices and in \cref{sketch_dimension} for general constrained optimizations. For the proof of \cref{informal}, we refer the readers to \cref{unconstrained} in \cref{sec: 2}.
\begin{theorem}
\label{informal}
Let $n_1, n_2, p, m \in\mathbb{N}^+.$ Consider the linear regression problem \eqref{QP} $(n=n_1n_2)$ with coefficient matrix $\mA\in \Real^{n_1n_2\times p}$ and the constraint $\C=\Real^p$. Fix the error criterion $ \eps\in(0,1)$ and the failure probability $\delta \in (0,1/2).$ Let $\mS\in\Real^{m \times n_1n_2}$ be a matrix whose rows are independent tensor products of Rademacher vectors respectively of length $n_1$ and $n_2$. If the sketching dimension $m$ satisfies 
\begin{equation}
\label{opt_rank(A)}
m = \max\left(\O\left(\frac{\text{rank}^2(\mA)+\log^2 (1/\delta)}{\eps}\right), \O\left(\frac{\text{rank}(\mA)+\log (1/\delta)}{\eps^2}\right)\right),
\end{equation}
then the sketched solution $\hat{\vx}\in\Real^{p}$ in \eqref{SQP} satisfies \eqref{eqn:err} with probability exceeding $1-\delta$.
\end{theorem}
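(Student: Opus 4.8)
The plan is to reduce the prediction-error guarantee \eqref{eqn:err} to a deterministic inequality involving how well $\mS$ preserves norms on a suitable low-dimensional cone, and then to control that event using Gaussian-width machinery applied to the tensorized sub-Gaussian matrix $\mS$. First I would set up the standard basic inequality: since $\hat{\vx}$ minimizes $\|\mS\mA\vx-\mS\vb\|_2^2$ over $\C$, writing $\Delta=\mA\hat{\vx}-\mA\vx^*$ and $\vr=\mA\vx^*-\vb$ (the optimal residual, which is orthogonal to the range of $\mA$ when $\C=\Real^p$), one gets $\|\mS(\Delta+\vr)\|_2^2\le\|\mS\vr\|_2^2$, hence $\|\mS\Delta\|_2^2\le -2\langle\mS\Delta,\mS\vr\rangle$. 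The vector $\Delta$ lies in the subspace $\mA\Real^p=\Range(\mA)$, which has dimension $\mathrm{rank}(\mA)$. So it suffices to show two events hold simultaneously with high probability: (i) a lower isometry bound $\|\mS\vu\|_2^2\ge(1-c\eps)\|\vu\|_2^2$ uniformly over unit $\vu\in\Range(\mA)$, and (ii) a cross-term bound $|\langle\mS\vu,\mS\vr\rangle|\le c\eps\|\vu\|_2\|\vr\|_2$ uniformly over the same set. Combining (i) and (ii) with the basic inequality gives $\|\Delta\|_2\lesssim\eps\|\vr\|_2$, and then $\|\mA\hat{\vx}-\vb\|_2\le\|\Delta\|_2+\|\vr\|_2\le(1+\mathcal{O}(\eps))\|\vr\|_2$, which is \eqref{eqn:err} after rescaling $\eps$.

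The substance is establishing (i) and (ii), and this is where I expect to invoke the general machinery of \cref{sketch_dimension}: the relevant complexity parameter is the Gaussian width of the set $\mA\C$ intersected with the error cone, and for an unconstrained problem with $\C=\Real^p$ this set is (a subspace-bounded version of) $\Range(\mA)$, whose Gaussian width squared is $\Theta(\mathrm{rank}(\mA))$. The key technical step is a uniform concentration inequality for the tensorized sub-Gaussian sketch: for a fixed subspace $T$ of dimension $r$, one shows $\sup_{\vu\in T\cap\sph^{n-1}}\bigl|\|\mS\vu\|_2^2/m-\|\vu\|_2^2\bigr|\lesssim\eps$ with probability $\ge 1-\delta$ provided $m\gtrsim(r+|\log\delta|)/\eps^2$. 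This is proved by a chaining argument (Dudley's inequality / generic chaining) controlling the sub-Gaussian — or more precisely sub-exponential, after squaring — process indexed by $\vu$; the width of the index set enters through the metric entropy of $T\cap\sph^{n-1}$, which is $O(r\log(1/\eps))$ at scale $\eps$, and the Gaussian-width normalization absorbs the logarithm. The cross-term (ii) is handled the same way, treating $\langle\mS\vu,\mS\vr\rangle/m$ as a process indexed by $\vu\in T\cap\sph^{n-1}$ with $\vr$ fixed, or alternatively by the polarization identity from (i) applied to $\vu\pm\vr/\|\vr\|_2$.

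The main obstacle — and the reason the tensor structure is nontrivial rather than a cosmetic variant of the Gaussian case — is that a row of $\mS$ of the form $\vgg\otimes\vs$ with $\vgg$ standard Gaussian and $\vs$ Rademacher is not itself sub-Gaussian with a dimension-free constant: its moments grow, and the quadratic form $\langle(\vgg\otimes\vs)(\vgg\otimes\vs)^\top,\cdot\rangle$ behaves like a product of two independent quadratic chaos variables, so the naive Hanson–Wright bound loses logarithmic factors. The fix, which is what makes the result "nearly sharp," is to exploit independence across the two tensor factors: condition on $\vs$, use that $\vgg\otimes\vs$ conditionally acts like a Gaussian vector on the $n_1$-dimensional reshaping of $\vu$, obtain the clean Gaussian concentration in that block, and then take expectation over $\vs$ using a Rademacher concentration (bounded-difference or a second chaining) for the resulting $n_2$-dimensional quantity. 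Carefully bookkeeping the two-level chaining so that the bound depends on $\mathrm{rank}(\mA)$ and $|\log\delta|$ additively rather than multiplicatively — i.e., so the tensor structure does not inflate the effective dimension — is the crux; this is exactly the content of \cref{sketch_dimension}, and \cref{informal} follows by specializing its Gaussian-width bound to $\mathrm{gw}(\Range(\mA)\cap\sph^{n-1})^2\le\mathrm{rank}(\mA)$ and reading off \eqref{opt_rank(A)} with the constant $4$ coming from the explicit width estimate for a subspace.
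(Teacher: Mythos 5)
Your proposal follows essentially the same route as the paper: the basic-inequality decomposition into a uniform lower isometry and a cross term over $\Range(\mA)\cap\S^{n_1n_2-1}$ is exactly the $D_1$/$D_2$ reduction of \cref{D_2/D_1} specialized to the unconstrained case, the uniform concentration over that subspace is \cref{sup_error} proved by generic chaining with the randomness split across the two tensor factors (your conditioning-on-$\vs$ step is the paper's decomposition via $\mZ_k=\textbf{diag}(\bxi_k^\top)$ and the sparse Hanson--Wright inequality), and the final bound comes from $\Wb(\Range(\mA)\cap\S^{n_1n_2-1})\le 2\sqrt{\text{rank}(\mA)}$ as in \cref{unconstrained}. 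The plan is correct and matches the paper's argument in all essential respects.
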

The estimate \eqref{opt_rank(A)} indicates that for an unconstrained least squares \eqref{QP} of large ambient dimension $n_1, n_2$, one can obtain an accurate approximated solution with high probability by solving the sketched problem \eqref{SQP} of dimension $m$, which can be as small as $\max(\O(\text{rank}^2(\mA)/\eps), \O(\text{rank}(\mA)/\eps^2))$.
According to Theorem $3.1$ in \cite{MSW19}, our estimation \eqref{opt_rank(A)} on the sketching dimension $m$ matches the optimal result of the row-wise tensor-structured Rademacher sketching matrix. Moreover, the second term in \eqref{opt_rank(A)} even hits the sharpest bound for unstructured sketches as in \cite{PW15}. We conclude that the row-wise tensor sketching strategy possesses both computational advantage and sufficient accuracy.

\subsection{Related work}
Sketching designs of row-wise tensor structure are previously studied in \cite{SGTU18, CLNW19, RR20} with applications for data memory reductions, tensor decompositions and linear inverse problems. Through the lens of unconstrained linear regressions, by applying a standard covering net technique, their results are suboptimal. The sketching sizes $m$ in the mentioned works are bounded below by $\text{rank}^8(\mA), \text{rank}^6(\mA), \text{rank}^4(\mA)$ respectively for qualified sketching to the linear regression in \eqref{QP}. Our work improves the estimate of $m$ to a linear or quadratic dependence on the rank of $\mA.$ 

A similar construction of the sketching matrix is considered in Meister et al. \cite{MSW19}, in which they form the sketching as the composition of a tensorized Rademacher matrix and a sparse $\{0,\pm1\}$-valued random matrix called CountSketch \cite{CCC04}. 
In comparison, we consider a wider class of sub-Gaussian random variables with adjustable density $q \in [0,1]$ that bridges the gap between sparse and dense sketching matrices. Hence the construction in \cite{MSW19} would fit into our sketching design framework.
Although the ultrasparse CountSketch in  Meister et al. \cite{MSW19} may enable faster running time, the theoretical analysis for the whole group of tensor-structured sketches, even for dense matrices, is barely comprehensive. In terms of accuracy, the authors of \cite{MSW19} show their design is optimal in vector-based embeddings up to logarithmic factors. In particular, by applying Theorem $2.1$ of \cite{MSW19} and a covering net argument, their work gives a bound of the sketching dimension 
\begin{equation}
\label{eqn:MSW19}
m \geq \max\left(\mathcal{O}\left(\frac{\text{rank}^2(\mA)+\log^2(1/\delta\, \eps)}{\eps}\right), \mathcal{O}\left(\frac{\text{rank}(\mA)+\log(1/\delta\, \eps)}{\eps^2}\right)\right),
\end{equation}
to restrict the sketching error $\eps$ in the sense of \eqref{eqn:err} with probability $1-\delta.$ Even though \eqref{eqn:MSW19} is shown to be optimal for their specific sparse sketching, their result is unknown for other dense matrices and for general constraint optimizations.
In comparison, we introduce a density level parameter $q$ to adjust the number of nonzeros in the sketching matrix. Additionally, our estimate $\max\left(\O\left(\frac{\text{rank}^2(\mA)+\log^2(1/\delta)}{\eps\, q^2}\right),\O\left(\frac{\text{rank}(\mA)+\log(1/\delta)}{\eps^2\, q^4}\right)\right)$ derived from \cref{unconstrained} is able to match their result in \eqref{eqn:MSW19} for any constant $q\in(0,1)$. 

Another related work is by Pilanci and Wainwright \cite{PW15}. In the focus of constrained least squares, the paper established a sharp sketching dimension bound employing the Gaussian width complexity for unstructured sub-Gaussian sketching matrices. In contrast, we study the case when the sketching matrices are constructed as row-wise tensor products of sub-Gaussian matrices given their computational advantages on tensor data coefficients. However, the tensorized sketching matrix introduces higher-order chaos and poses challenges to the sketching dimension estimation. Regarding the theoretical analysis, we develop a sketching dimension bound in \cref{sketch_dimension} via a modified complexity parameter $M$ defined in \cref{M def} \eqref{M-complexity}. This $M$-complexity is larger than its Gaussian width counterpart for the unstructured sketching case and may lead to suboptimal estimate. 

\subsection{Notations}
In the paper, we denote by $\|~ \|_{2}$ and $\|~\|_{\infty}$ respectively the $\ell_2$ and $\ell_\infty$ norms of a vector, and $\|~ \|, ~\|~\|_{F}$ respectively the spectral and Frobenius norm of a matrix. The symbol $\pr$ denotes the probability of an event, and the notion $\E$ is the expectation of a random variable. A $n$-dimensional vector following the distribution $\bern(q)^n$ has independent $\{0,1\}$-valued entries which take the value $1$ with probability $q.$
We use the calligraphic font $\bm{\mathcal{X}}$ to denote vector sets, the Roman script uppercase letter $\mX$ to denote matrices, the Roman script lowercase letter $\vx$ to denote vectors, and simple lowercase letter $x$ to denote a scalar entry. We write $\id_n \in \Real^{n \times n}$ as the $n$ by $n$ identity matrix. We denote $c, C$ as absolute constants whose values may change from time to time. A constant is universal if its value does not depend on any other parameters. 

\subsection{Organization of the paper}
The rest of the paper is organized as follows. 
\cref{sec: 2} starts with the reasoning for the sketching design and follows the two main results \cref{JL} and \cref{sketch_dimension} respectively about the embedding property and sketching dimension estimation results. We conclude the section with direct applications for two common classes of optimization problems.
In \cref{sec: 3}, we show an important intermediate result \cref{sup_error} about the supremum of embedding error of the tensorized sub-Gaussian processes and give the complete proof of \cref{sketch_dimension}. 
The proofs for \cref{JL} and \cref{sup_error} are then illustrated in \cref{sec: 4}. These proofs are built upon a crucial concentration result presented in Section 4.1.
\cref{sec: numerics} contains the numerical performance of the proposed sketching construction and provides empirical support for the theory.
We conclude and discuss future research directions in \cref{sec: conclusion}.

\section{Sketching dimension estimation}
\label{sec: 2}
The main goals of \cref{sec: 2} are analyzing the vector-based embedding property of the tensor-structured sub-Gaussian sketching matrices and developing a sketching size estimation for the constrained least squares problem \eqref{SQP}. 

We begin with some linear algebra and probability background. 
\begin{definition}
Given matrices $\mX\in\mathbb{R}^{M_1\times N_1}$ and $\mY\in\mathbb{R}^{M_2\times N_2}$, the Kronecker product of $\mX$ and $\mY$ is defined as
\begin{equation}
\mX \otimes \mY= \left[                 
\begin{array}{cccc}   
x_{1,1}\mY &x_{1,2}\mY  &\dots&x_{1,N_1}\mY \\  
\vdots&\vdots &\ddots&\vdots\\
x_{M_1,1}\mY &x_{M_1,2}\mY  &\dots&x_{M_1,N_1}\mY  \\  
\end{array}
\right]\in\mathbb{R}^{M_1M_2\times N_1N_2}.
\end{equation}
The Kronecker product satisfies the distributive property:
\begin{equation}
\label{distributive}
\mW\mX\otimes \mY\mZ = (\mW\otimes \mY)(\mX\otimes \mZ).
\end{equation}

Assuming $M_1 = M_2$ and $N_1 = N_2,$ the Hadamard product is defined as 
\begin{equation}
\mX\circ \mY \in \Real^{M_1 \times N_1},
\end{equation}
which is the element-wise product of the matrices. 
\end{definition}

\begin{definition}
\label{psi_2}
The sub-Gaussian norm of a random variable $x \in \Real$, denoted by $\|x\|_{\psi_2},$ is defined as:
$$
\|x\|_{\psi_2} = \inf\{t>0: ~~ \ex\, \exp(x^2/t^2) \leq 2\}.
$$
A random variable is sub-Gaussian if it has a bounded $\psi_2$ norm.
\end{definition}
We note that all normal random variables and all bounded random variables are sub-Gaussian.

\subsection{Design of the sketching matrix}
\label{design}
Tensor structure is ubiquitous in applied mathematics \cite{KB09}, statistics \cite{AGHKT14}, and deep learning \cite{CSS16}. Many datasets are naturally arranged with several attributes and can be represented as multi-arrays, namely tensors. Such structure also appears in the optimization problem \eqref{QP} in practical applications. We initiate our sketching matrix design with two motivating examples: CANDECOMP/PARAFAC (CP) tensor decomposition and linearized PDE inverse problems.

CP tensor decomposition is an important tool for large-scale data analysis. The workhorse algorithm in fitting CP tensor decomposition is the alternating least squares, which solves the following convex optimization problem:
\begin{equation}
\label{eqn:als}
\arg\min_{\mX\in \mathbb{R}^{R\times n }}\| \mA \mX - \mB \|_F \,,
\end{equation} 
where $\mA$ and $\mB$ are data matrices that are flattened from certain tensors. The flattening process forces columns of $\mA$ to have tensor structures. In particular, let $\va$ be a column of $\mA$, then it can be rewritten as a Kronecker product:
\[
\va = \bigotimes_{\ell=1}^d \va^{(\ell)}\,,
\]
where $d+1$ is the order of the original tensor from which $\mA$ is unfolded. Due to the high computation cost of solving \eqref{eqn:als}, sketching is used as a tool of solving CP tensor decomposition, see \cite{BBK18}. To relate the example to our work, we consider the $d=2$ case.

Another interesting example is the class of linearized PDE inverse problems. One famous case is the Electrical Impedance Tomography (EIT), which infers the body conductivity images from boundary measurements of voltage and current density on the surface. The linearization of EIT problem leads to the following Fredholm equation of the first kind:
\begin{equation}\label{eqn:ip}
\int f_{i_1}(y)\, g_{i_2}(y)\, \sigma(y) \,dy = \text{data}_{i_1,i_2}\,, \quad \forall 1\leq i_1\leq n_1,\,  1\leq i_2\leq n_2.
\end{equation}
Here $y$ is the spatial variable and $\sigma$ is the image to reconstruct. The functions $f_{i_1},g_{i_2}$ are known from physical understanding of EIT, and $\text{data}_{i_1,i_2}$ are observed data. The subscript $i_1$ and $i_2$ are indices for where the current is applied on the body surface and where the voltage is measured. In practice, one has to place electric nodes $y_j$ on various places to produce sufficient data for the reconstruction, leading to a large number of equations with the same structure as in \eqref{eqn:ip}. A numerical discretization of \eqref{eqn:ip} then produces a linear equation
$
\mA \vx  = \vb,
$
where $a_{i,j} = f_{i_1}(y_j)\, g_{i_2}(y_j)$, $x_j = \sigma(y_j), b_i = \text{data}_{i_1,i_2}$ and the index $i$ is associated with a pair of indices $(i_1, i_2).$ The above over-determined system is usually solved as an optimization problem \eqref{QP}. Each entry of $\mA$ is a product of functions $f_{i_1}$ and $g_{i_2},$ and entries in one column share the same dependence on the spatial variable $y_j.$ It can be shown that such data structure is equivalent to having Kronecker structure for all columns of $\mA$, that is, a column $\va$ in $\mA$ can be written as:
\[
\va = \vf \otimes \vg\in\Real^{n_1n_2} \,,
\]
for some column vectors $\vf \in \Real^{n_1}$ and $\vg\in\Real^{n_2}.$ For more details of tensor structures in linearized inverse problem, we refer the readers to \cite{CLNW19}.

In the aforementioned two examples, we see that tensor structure appears in the the columns of input matrix $\mA$ in the least squares objectives. When implementing the sketching strategy on the data matrix, the cost of the matrix multiplication $\mS\mA$ can be drastically reduced if $\mS$ has a consistent tensor structure in its rows. To further reduce the time complexity, we consider the sketching constructed with only a portion of nonzero entries. 
To this end, we design the sketching matrix $\mS$ in the following form:
\begin{equation}
\label{sketch}
\mS =
\frac{1}{\sqrt{m}}\, 
\left[
\begin{array}{c}
\veta_1^\top\otimes \bxi_1^\top\\
\vdots\\
\veta_k^\top\otimes \bxi_k^\top\\
\vdots\\
\veta_m^\top\otimes \bxi_m^\top\\
\end{array}
\right]\in \Real^{m \times n_1n_2}\,,
\end{equation}
where $\veta_k$ and $\bxi_k$ are independent copies of $\veta \in \Real^{n_1}, \bxi\in \Real^{n_2}.$ 
The random vectors $\veta$ and $\bxi$ are set to be
\begin{equation}
\label{sketch ii}
\veta = \frac{1}{\sqrt{q}}\, \left(\vphi^{(1)} \circ \vsig^{(1)}\right)\in \Real^{n_1}, \quad \bxi = \frac{1}{\sqrt{q}}\, \left(\vphi^{(2)} \circ \vsig^{(2)}\right)\in \Real^{n_2},
\end{equation}
where $\vphi^{(1)} \in \Real^{n_1}, \vphi^{(2)} \in \Real^{n_2}$ are vectors with two sets of i.i.d. zero-mean, unit variance sub-Gaussian variables. The vectors $\vsig^{(1)}$ and $\vsig^{(2)}$ follow the distributions $\bern(q)^{n_1}$ and $\bern(q)^{n_2}$ respectively with $q\in(0,1]$. We call $q$ as the density level. \footnote{We note that the density level $q$ is the percentage of nonzeros in expectation for each tensor factor of the sketching matrix $\mS.$ As a result, the nonzero percentage of $\mS$ is $q^2$ in expectation.}

\subsection{Main result}
\label{main result}
\subsubsection{Optimal JL property of tensorized sketching matrices}
The Johnson-Lindenstrauss property is considered the cornerstone for developing theoretical analysis of a sketching construction. The celebrated JL lemma \cite{JL84, LN17} shows that a finite set of high-dimensional points $\sY$ can be mapped to a space of (optimally) lower dimension $\mathcal{O}(\log\left(|\sY|\right)/\eps^2)$ within $(1 \pm \eps)$ distortion, where $|\sY|$ is the cardinality of $\sY$. Common choices of such maps are the class of sub-Gaussian matrices \cite{MPT07}. 

We present the JL property of the tensor-structured sub-Gaussian sketching matrices in \cref{JL}. A highlight of this main result is that the obtained embedding dimension \eqref{jl m} is optimal for our proposed tensor-structured sketches. This outcome is particularly useful in deriving guarantee for sketching unconstrained linear regressions, see \cref{unconstrained} for details.
\begin{theorem}
\label{JL}
Let $n_1, n_2, m\in \mathbb{N}^+.$ Fix a finite set $\sY \in\Real^{n_1n_2}$ of cardinality $|\sY|$ and parameters $\eps,\delta \in (0,1).$ Suppose the sketching matrix $\mS$ defined in \eqref{sketch}-\eqref{sketch ii} has the embedding dimension
\begin{equation}
\label{jl m}
m \geq C\, \max\left(\frac{\log^2(2\,|\sY|/\delta)}{\eps}, \frac{\log(2\,|\sY|/\delta)}{\eps^2}\right),
\end{equation}
then
 \[\pr\left(\|\mS\vy\|_2^2=(1\pm \eps)\,\|\vy\|_2^2, ~~\forall ~\vy\in  \sY \right) \geq 1-\delta.\] Here, $C>0$ \footnote{The constant $C$ depends on the largest $\psi_2$ norm of entries in $\vphi^{(1)}, \vphi^{(2)}$ and density level $q$, recalling the definition in \eqref{sketch}-\eqref{sketch ii}. We refer the readers to \eqref{C jl} in \cref{JL proof} for the explicit choice of $C$.} in \eqref{jl m} is a finite number.
\end{theorem}
The proof of \cref{JL} is shown in \cref{JL proof}. 

Regarding the embedding dimension bound $m$ \eqref{jl m}, we emphasize that the first term in maximal function exhibits merely linear dependence on the inverse of the error criterion $1/\eps$, while the second term in fact matches the best-known result for any oblivious sketches. An interesting lower bound in Theorem 3.1 of \cite{MSW19} proves that a row-wise tensor-structured sketching matrix would fail to have a qualified embedding if the embedding dimension $m$ is smaller than the quantity \eqref{jl m}. This implies that our embedding dimension bound in \cref{JL} is optimal. 

\subsubsection{Sketching for constrained least squares}
The above result \cref{JL} can be applied to develop sketching dimension for unconstrained linear regression problems but fails to tackle constrained least squares due to its lack of characterization of the underlying geometry of the constrained sets. We hence explore a new geometric approach and proceed to build a sketching dimension estimation for the tensor-structured $\mS \in \Real^{m \times n_1n_2}$ \eqref{sketch}-\eqref{sketch ii} in constrained convex optimization problems.

In convex analysis, convex cones help represent the optimality condition for the program. The tangent cone of the optimum $\vx^*$ with the convex constraint $\C$ is defined as:
\begin{equation}
\label{cone}
\K:= \{\mathbf{\Delta} = t(\vx-\vx^*)\in \Real^p, \quad\text{for}~t\geq 0 ~\text{and}~ \vx \in \C\}.
\end{equation}
We focus on the transformed cone $\mA\K\subset\Real^{n_1n_2}$ to measure the gap between the prediction error $\|\mA\hat{\vx}-\vb\|_2^2$ and the minimal error $\|\mA\vx^*-\vb\|_2^2$. 
Of particular interest, our analysis exploits the $M$-complexity (\cref{M def}) of the normalized transformed cone: $\AKb,$ where the overline symbol $\overline{\phantom{AA}}$ denotes the normalization of a set, i.e.
\begin{equation*}
\AKb = \left\{\frac{\vy}{\|\vy\|_2}\in \Real^n~~\Big\vert ~~\text{for}~\vy\in\AK\subset \Real^{n_1n_2}, \vy \neq {\bf 0}\right\}.
\end{equation*}
To formally introduce the $M$-complexity term, we start with some classical concepts in generic chaining \cite{T05}. Given a metric space $(T, d),$ an admissible sequence $\{\pmb{\mathcal{A}}_s\}_{s\in \mathbb{N}}$ of $T$ is a partition of $T$ such that $|\pmb{\mathcal{A}}_0| = 1$ and for $s \geq 1$, $|\pmb{\mathcal{A}}_s| \leq 2^{2^s}.$

\begin{definition}[$\gamma$-functionals \cite{T05}]
Suppose $\alpha \geq 1,$ we define the $\gamma_\alpha$-functionals by 
\begin{equation*}
\gamma_\alpha(T,d) := \inf\sup_{t \in T}\sum_{s\in \mathbb{N}} 2^{s/\alpha}\diam(\mathcal{A}_s(t)),
\end{equation*}
where $\{\pmb{\mathcal{A}}_s\}_{s\in \mathbb{N}}$ is any admissible sequence of $T$, $\mathcal{A}_s(t)$ is an element in partition $\pmb{\mathcal{A}}_s$ that contains $t$ and $\diam(\cdot)$ denotes the diameter of a set. The infimum is taken over all admissible sequences.
We use the short-hand notation $\gamma_\alpha(T)$ for $T$ being a set on the Euclidean space and $d$ being the $\ell_2$ distance. 
\end{definition}

The $M$-complexity parameter is built upon the computation of $\gamma$-functionals. 
\begin{definition}[$M$-complexity \cite{GK20}]
\label{M def}
For a set $\sY \subset \Real^{n},$ the $M$-complexity of the normalized set $\sbY \subset \S^{n-1}$ w.r.t. semi-norms $\|\cdot\|_{\text{g}}, \|\cdot\|_{\text{e}}$ is defined as:
\begin{equation}
\label{M-complexity}
M^{\text{(g,e)}}(\sbY):= \inf\left\{\gamma_1(\sT,  \|\cdot\|_{\text{e}})+\gamma_2(\sT,  \|\cdot\|_{\text{g}}) ~~ \vert  ~~\sT \subset \Real^n, \text{such~that~}\sbY \cup \{\mathbf{0}\}\subset \text{conv}(\sT)  \right\},
\end{equation}
where $\conv(\cdot)$ refers to the convex hull. For brevity, we write $M(\sbY)$ to denote $M^{(2,2)}(\sY)$ w.r.t the $\ell_2$ Euclidean distance.
\end{definition}

\begin{remark}
We adopt the notion of $M$-complexity from Definition 2.6 in \cite{GK20} by setting $t=1$. For the computation of $M(\sbY)$, one needs to measure the size of an optimal skeleton set $\sT$ that covers $\sbY$ and the origin, rather than the set $\sbY$ itself. We employ this calculation strategy in our estimation result \cref{sketch_dimension}, due to the convexity of least squares functions and the fact that all extreme value points on $\sbY$ are controlled within the covering skeleton. Although finding such optimal skeleton set remains an open problem, any skeleton set that covers $\sbY$ and the origin is sufficient to give an upper bound for $M(\sbY)$. For example, we later show an explicit skeleton construction in \cref{sparse recovery} for $\ell_1$-constrained sparse recovery problems. 
\end{remark}

In comparison with the $M$-complexity, we introduce another interesting complexity parameter: Gaussian width \cite{G85}. Gaussian width is a widely used notion in statistical learning theory and geometric analysis. It was shown to help establish an optimal bound for unstructured sub-Gaussian sketches \cite{PW15}. One important property is that Gaussian width is quantitatively equivalent to the $\gamma_2$-functionals up to some constant for the powerful majorizing measure theorem \cite{T96}. 

\begin{definition}[Gaussian width \cite{G85}]
The Gaussian width of the normalized set $\sbY \subset \S^{n-1}$ is defined as
\begin{equation}
\label{WY}
\Wb(\sbY) = \E \sup_{\vy \in \sbY} \langle \mathbf{n},\vy\rangle,
\end{equation}
where $\mathbf{n}\in \Real^{n}$ is a random vector drawn from the normal distribution $\nor(\mathbf{0}, \id_n).$
\end{definition}

\begin{remark}
\label{M-complexity remark}
The $M$-complexity of a set is strictly bigger than its Gaussian width counterpart. From \cref{M def}, $M$ contains not only the $\gamma_2$-functional which is quantitatively equivalent to the Gaussian width measure, but also a larger $\gamma_1$-functional component. However, in the current research stage, we have less understanding to explicitly evaluate the $\gamma_1$ term.  
\end{remark}

With the necessary preliminaries in place, we now illustrate the main result about the sketching dimension $m$ to guarantee a qualified output for a constrained convex program. 

\begin{theorem}
\label{sketch_dimension}
Let $n_1, n_2, p, m \in\mathbb{N}^+.$ Consider the constrained least squares \eqref{QP} ($n = n_1n_2$) with coefficient matrix $\mA\in \Real^{n_1n_2\times p},$ the vector $\vb\in\Real^{n_1n_2},$ the optimal solution $\vx^*\in\Real^p$ and the tangent cone $\K\subset \Real^p$ \eqref{cone}. Fix the parameters $\eps\in(0,1), \delta\in(0,1/2).$ If the sketching matrix $\mS\in\Real^{m \times n_1n_2}$ defined in \eqref{sketch}-\eqref{sketch ii} has the sketching size $m$ satisfying
\begin{equation}
\label{sketch_dimension_term}
m \geq \max\left(C\, \frac{\left(M(\AKb) \right)^2\, \log^2(1/\delta)}{\eps^2}, \quad 64^2\right),
\end{equation}
for some finite constant $C>0$ \footnote{The constant $C$ depends on the largest $\psi_2$ norm and density level of $\mS$. For the explicit choice of $C$, please see \eqref{C} in \cref{sketch_dimension proof}.}, then the sketched solution $\hat{\vx}\in\Real^p$ of \eqref{SQP} suffices to have
\begin{equation}
\label{eps_distort}
\|\mA\hat{\vx}-\vb\|^2_2\leq (1+\eps)^2\, \|\mA\vx^* -\vb\|_2^2
\end{equation}
with probability exceeding $1-\delta.$ 
\end{theorem}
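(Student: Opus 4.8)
The plan is to reduce the prediction error guarantee \eqref{eps_distort} to a uniform statement about how well the sketching matrix $\mS$ preserves norms over the transformed tangent cone $\AK$, and then invoke the intermediate result \cref{sup_error} (cited in the excerpt) to control that supremum embedding error. First I would recall the standard variational characterization: because $\vx^*$ solves \eqref{QP} and $\hat\vx$ solves \eqref{SQP}, the vector $\hat\vx - \vx^*$ lies (after scaling) in the tangent cone $\K$, and the optimality of $\hat\vx$ for the sketched objective gives $\|\mS\mA\hat\vx - \mS\vb\|_2^2 \le \|\mS\mA\vx^* - \mS\vb\|_2^2$. Expanding both sides and writing $\mathbf{\Delta} = \hat\vx - \vx^*$, the residual $\vr := \mA\vx^* - \vb$ is orthogonal to $\Range(\mA)$ only in the unconstrained case, so in general I would carry the cross term $\langle \mS\mA\mathbf\Delta, \mS\vr\rangle$ explicitly. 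The goal is to show that if $\mS$ acts as an approximate isometry on $\AKb$ (multiplicative distortion $\le \eps/4$ or so) and simultaneously does not inflate the cross term $\langle \mS\mA\mathbf\Delta,\mS\vr\rangle$ relative to $\|\mA\mathbf\Delta\|_2\|\vr\|_2$ by more than $O(\eps)$, then rearranging yields $\|\mA\mathbf\Delta\|_2 \le c\,\eps\,\|\vr\|_2$, whence $\|\mA\hat\vx-\vb\|_2 \le (1+\eps)\|\vr\|_2$ by the triangle inequality. This deterministic reduction is the skeleton of the argument used in Pilanci--Wainwright, and I would adapt it verbatim.

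Next I would isolate the two random events that the reduction needs. Event~(i): $\sup_{\vy\in\AKb}\bigl|\,\|\mS\vy\|_2^2 - 1\,\bigr| \le \eps/4$. Event~(ii): $\sup_{\vy\in\AKb}\bigl|\langle \mS\vy,\, \mS\vr/\|\vr\|_2\rangle - \langle\vy,\vr/\|\vr\|_2\rangle\bigr| \le \eps/4$, or a version bounding the sketched cross term against the true one. Both are instances of the same phenomenon — uniform concentration of a quadratic form in the tensorized sub-Gaussian rows of $\mS$ over a structured index set — and this is precisely what \cref{sup_error} is designed to deliver. I would state \cref{sup_error} with the relevant set being $\AKb$ (respectively $\AKb$ augmented by the fixed direction $\vr/\|\vr\|_2$, whose Gaussian width contributes only a constant), read off that the controlling complexity parameter is $\Wb(\AKb)$, and observe that the probability bound there is of the form $1 - \exp(-c\,m\,q\,\eps^2/\ldots)$ or similar, which is at most $\delta$ exactly when $m \gtrsim |\log\delta|/(\eps^2 q)$, while the bias term in the expectation of the supremum is controlled below $\eps$ when $m \gtrsim \Wb(\AKb)^2/(\eps^2 q^2)$. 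Taking $C$ large enough to absorb the $\psi_2$ norms and $\max_k\|\bxi_k\|_\infty$ (as flagged in the footnote) and taking a union bound over events (i) and (ii) then completes the proof.

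The main obstacle — and the reason the paper devotes a whole section to it — is the uniform bound of \cref{sup_error}: controlling $\E\sup_{\vy\in\AKb}\bigl|\|\mS\vy\|_2^2-1\bigr|$ for the \emph{tensorized} sparse sub-Gaussian $\mS$ rather than a plain sub-Gaussian matrix. The rows $\veta_k^\top\otimes\bxi_k^\top$ are not sub-Gaussian with a dimension-independent constant (a product of two sub-Gaussians is only sub-exponential, and the sparsification by $\bern(q)$ makes the tails heavier still), so the clean single-scale chaining / Bernstein argument that works for Gaussian sketches fails. The fix is generic chaining: one decomposes $\|\mS\vy\|_2^2 - 1 = \frac1m\sum_k \bigl(\langle\veta_k\otimes\bxi_k,\vy\rangle^2 - \|\vy\|_2^2\bigr)$ as an empirical process indexed by $\vy\in\AKb$, splits the increments into a sub-Gaussian part and a sub-exponential part à la Bernstein, and bounds the two resulting $\gamma_2$ and $\gamma_1$ functionals of $\AKb$ by (a multiple of) $\Wb(\AKb)$ and $\Wb(\AKb)^2$ using Talagrand's majorizing-measure machinery, paying the density factors $q^{-1}$ and $q^{-2}$ from the variance of the sparsified entries. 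Since \cref{sup_error} is assumed already proved, in the present proof I only need to plug it in carefully — matching its hypotheses, tracking where the $q$-dependence and the $\psi_2$ / $\|\bxi\|_\infty$ constants enter, and verifying that augmenting $\AKb$ by a single fixed direction costs nothing — but it is worth flagging that all the genuine difficulty has been quarantined there, and the argument here is the comparatively routine deterministic-reduction-plus-union-bound wrapper.
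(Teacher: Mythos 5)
Your proposal follows essentially the same route as the paper: the deterministic reduction is exactly Lemma~1 of Pilanci--Wainwright (the paper's \cref{D_2/D_1}, with your events (i) and (ii) corresponding to the quantities $D_1$ and $D_2$), both events are controlled by invoking \cref{sup_error} over $\AKb$ (the cross term via polarization, applying \cref{sup_error} to $\tilde{\vy}+\vz$, $\tilde{\vy}$, and $\vz$ separately, which is what your "augmenting $\AKb$ by the fixed direction $\vr/\|\vr\|_2$" amounts to), and the final bookkeeping of $m$ against $\Wb(\AKb)^2/(\eps^2 q^2)$ and $|\log\delta|/(\eps^2 q)$ matches the paper's choice of $\omega$ and $C$. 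The approach is correct and no genuinely different ideas are introduced.
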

The proof of \cref{sketch_dimension} is in \cref{sketch_dimension proof}. 

To interpret the above sketching dimension estimate for constrained least squares, we compare it with the unconstrained case result \cref{informal} which is derived separately via the JL property. It turns out the estimate in \cref{sketch_dimension} matches the worst-case scenario of the unconstrained case shown in \eqref{opt_rank(A)} \cref{informal}. Specifically, the sketching dimension has quadratic dependence on both $1/\eps$ and $\log(1/\delta)$. 
Moreover, the sketching dimension has quadratic dependence on the $M$-complexity, which replaces the Gaussian width parameter $\W(\AKb)$ in the well-known unstructured sub-Gaussian sketching result, see Theorem 1 in \cite{PW15}. Like Gaussian width, this new $M$-complexity suffices to capture the geometry of constrained optimizations. 
However, it gives a suboptimal estimate for the unconstrained case, where the resulting bound for $m$ is larger than $\O(\text{rank}(\mA))$. We suspect such sub-optimality is unavoidable due to the higher-order tensor structure in the sketching matrix. 

\subsection{Concrete case studies}
\label{applications}
In this subsection, we provide the case studies for different kinds of optimization problems utilizing the two main results \cref{JL} and \cref{sketch_dimension} shown above. 
In particular, in the example of unconstrained linear regressions, we establish a theoretical guarantee by applying \cref{JL} with standard covering net method to achieve a subspace embedding.

In terms of constrained convex programs with versatile geometric landscapes, \cref{sketch_dimension} offers a unified framework via calculation of the complexity $M(\AKb)$ based on the geometry of the constrained tangent cone $\K$ and the data matrix $\mA$.
For the sparse recovery with $\ell_1$-constraint problem, we give an explicit computation for $M(\AKb)$ and obtain a sketching size estimation as a direct consequence from \cref{sketch_dimension}.

\subsubsection{Unconstrained linear regression}
Unconstrained linear regression is the most common example of the convex program \eqref{QP}, by setting $\C$ to be the whole Euclidean space $\Real^p.$ The following corollary shows that it suffices to have $\max(\O(\text{rank}^2(\mA)/\eps), \O(\text{rank}(\mA)/\eps^2))$ rows in the sketching matrix to sketch an unconstrained linear regression problem. We claim such estimation for the row-wise tensor sub-Gaussian sketch \eqref{sketch} is sharp because it is derived from the optimal JL result following a covering net argument, which is a standard approach tackling unconstrained linear regressions. 

\begin{corollary}
\label{unconstrained}
Let $n_1, n_2, p, m \in\mathbb{N}^+.$ Consider the linear regression problem \eqref{QP} with data matrix $\mA\in \Real^{n_1n_2\times p}$ and the constraint $\C=\Real^p$. Fix the parameters $\eps\in(0,1)$ and $\delta \in (0,1/2).$ If the sketching matrix $\mS \in \Real^{m \times n_1n_2}$ defined in \eqref{sketch}-\eqref{sketch ii} has the sketching size
\begin{equation}
\label{unconstrained m}
m \geq C\,\max\left(\frac{\text{rank}^2(\mA)+\log^2(1/\delta)}{\eps}, \frac{\text{rank}(\mA)+\log(1/\delta)}{\eps^2}\right),
\end{equation}
then with probability exceeding $1-\delta,$ the sketched solution $\hat{\vx}\in\Real^{p}$ in \eqref{SQP} satisfies the accuracy as in \eqref{eps_distort}. Above, $C>0$ in \eqref{unconstrained m} is a finite constant.
\end{corollary}

\begin{proof}[Proof of \cref{unconstrained}]
The proof is based on \cref{subspace embed} and resembles the proof of Theorem 2.3 in \cite{W14}. 
\begin{lemma}
\label{subspace embed}
Follow the same set-up in \cref{unconstrained}. Draw the sketching matrix $\mS \in \Real^{m \times n_1n_2}$ with $m$ satisfying
\begin{equation*}
m \geq C\,\max\left(\frac{\text{rank}^2(\mA)+\log^2(1/\delta)}{\eps}, \frac{\text{rank}(\mA)+\log(1/\delta)}{\eps^2}\right),
\end{equation*}
for some finite number $C >0.$ Then $\mS$ suffices to be an $(1\pm \eps/4)$ embedding on $\text{Range}([\mA, \vb]) \subset \Real^{n_1n_2},$ i.e. 
\begin{equation}
\label{eps-embedding}
\pr\left(\|\mS\vy\|_2^2=(1\pm \frac{\eps}{4})\,\|\vy\|_2^2, ~~\forall ~\vy\in  \text{Range}([\mA, \vb]) \right) \geq 1-\delta.
\end{equation}
\end{lemma}

\begin{proof}[Proof of \cref{subspace embed}]
\label{subspace embed proof}
We show the proof via a standard covering net argument. In particular, based on the technique in Theorem 2.1 of \cite{W14}, by applying the sketching matrix $\mS$ on the set $\sY$ that is a $1/2$-net of $\text{Range}[\mA,\vb] \cap \S^{n_1n_2-1}$ with distortion factor $\O(\eps)$, we can achieve
\begin{equation*}
\|\mS\vy\|_2^2=(1\pm \frac{\eps}{4})\,\|\vy\|_2^2, ~~\forall ~\vy\in  \text{Range}([\mA, \vb])
\end{equation*}
Given a failure probability $\delta,$ since this covering net $\sY$ has cardinality ${\O}(9^{\text{rank}(\mA)})$ (Lemma 2.2 in \cite{W14}), \cref{JL} provides the sketching dimension
\begin{equation*}
m \geq C\,\max\left(\frac{\text{rank}^2(\mA)+\log^2(1/\delta)}{\eps}, \frac{\text{rank}(\mA)+\log(1/\delta)}{\eps^2}\right),
\end{equation*}
for some finite constant $C>0$. The proof of \cref{subspace embed} is complete. 
\end{proof}
 
Following Theorem 1 in \cite{CLNW19}, under the condition \eqref{eps-embedding} for $\mS$, the sketched solution $\hat{\vx}$ in \eqref{SQP} can be a good approximation to the true solution $\vx^*$ \eqref{QP}, i.e.,
\[
\|\mA\hat{\vx}-\vb\|_2^2 \leq (1\pm \eps)\,\|\mA\vx^*-\vb\|_2^2.
\]
The proof of \cref{unconstrained} is complete. 
\end{proof}

\subsubsection{Sparse recovery via $\ell_1$-constrained optimization}
We study the noiseless sparse recovery problem, which plays a central role in compressive sensing and signal processing. 
The goal of the recovery is to find a sparse $\hat{\vx}\in\Real^p$ to approximate an unknown signal $\overline{\vx}\in\Real^p$ (also sparse in practice), from a small number of random measurements $\mPhi \overline{\vx} \in \Real^m,$ with a short wide matrix $\mPhi\in\Real^{m \times p} (m \ll p).$ 
Furthermore, we can optimize the sparse recovery by an $\ell_1$-constrained least squares problem \cite{FNW07}, also known as the Lasso approach \cite{RT96}. Such optimization has the formulation
\begin{equation}
\label{compressed}
\hat{\vx} = \arg\min_{\|\vx\|_1\leq R} \|\mPhi\vx-\mPhi\overline{\vx}\|_2^2,
\end{equation}
where we set $R = \|\overline{\vx}\|_1>0$ to be the radius of the $\ell_1$ ball. 

We show that \cref{sketch_dimension} provides an estimation for the number of measurements $m$ to pursue an accurate recovery from the tensor sub-Gaussian sketching matrix $\mS$ \eqref{sketch}.
Recall the sketched program in \eqref{SQP} and assume $p = n_1n_2$, 
\begin{equation}
\label{sketch-lasso}
\mS =\mPhi \in\Real^{m\times p} ,\quad \mA = \id_{p} \in \Real^{p\times p}, \quad \vb =\overline{\vx}\in\Real^p,
\end{equation}
the following $\ell_1$-constrained sketched least squares, 
\begin{equation}
\label{sketch l_1}
\hat{\vx} = \arg\min_{\| \vx\|_1\leq R} \|\mS\mA\vx-\mS\vb\|_2^2
\end{equation}
is equivalent to the formulation \eqref{compressed}.

Furthermore, in the case of full data acquisition, we denote $\vx^*$ as one minimizer such that 
it has the least nonzeros among all optimizers of the following,
\begin{equation}
\label{uncompressed}
\vx^* = \arg\min_{\|\vx\|_1\leq R} \|\vx-\overline{\vx}\|_2^2\,.
\end{equation}

\begin{corollary}
\label{sparse recovery}
Let $m, p \in \mathbb{N}^+,$ $\delta \in (0, 1/2), \eps \in (0,1).$ Consider the signal to uncover $\overline{\vx}\in\Real^p$. Suppose the optimal solution $\vx^*\in\Real^p$ of \eqref{uncompressed} has $s$ nonzero entries. If $\mPhi \in \Real^{m \times p}$ in \eqref{compressed} defined as \eqref{sketch}-\eqref{sketch ii} ($n_1n_2=p$) has the number of measurements
\begin{equation}
\label{measurements}
m \geq \max\left(C\,\frac{s^2\log^2(p/s)\,\log^2(1/\delta)}{\eps^2}, 64^2\right),
\end{equation}
then with probability exceeding $1-\delta,$ the solution $\hat{\vx}\in \Real^p$ of \eqref{compressed} satisfies 
\[
\|\hat{\vx} - \overline{\vx}\|_2^2 \leq (1+\eps)^2\, \|\vx^* - \overline{\vx}\|_2^2.
\]
Above, $C >0$ is a finite constant.
\end{corollary}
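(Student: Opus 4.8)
The plan is to obtain \cref{sparse recovery} as a direct specialization of \cref{sketch_dimension}. Under the identification \eqref{sketch-lasso} --- that is, $\mS=\mPhi$, $\mA=\id_p$, $\vb=\overline{\vx}$ with $n_1n_2=p$ --- the sketched program \eqref{sketch l_1} is literally \eqref{compressed}, the full-data program \eqref{QP} is literally \eqref{uncompressed}, and $\mPhi$ carries the prescribed tensor form \eqref{sketch}--\eqref{sketch ii} with density $q$. Hence \cref{sketch_dimension} applies and its conclusion \eqref{eps_distort} becomes $\|\id_p\hat{\vx}-\overline{\vx}\|_2^2\le(1+\eps)^2\,\|\id_p\vx^*-\overline{\vx}\|_2^2$, which is exactly the asserted bound. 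So the entire task reduces to showing that the Gaussian width entering \eqref{sketch_dimension_term} satisfies $\Wb(\AKb)^2\le\min(36\,s\log p,\,4p)$, so that the hypothesis \eqref{measurements} forces \eqref{sketch_dimension_term}.

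Since $\mA=\id_p$, the transformed cone is $\AK=\K$, and after normalization $\AKb=\K\cap\S^{p-1}$, where $\K$ is the tangent cone \eqref{cone} at $\vx^*$ of the feasible set $\C=\{\vx:\|\vx\|_1\le R\}$. I would bound its width in two complementary ways. First, the crude bound: $\K\cap\S^{p-1}\subseteq\S^{p-1}$, the unit sphere of $\Real^p$, so --- exactly as in the proof of \cref{unconstrained}, invoking Example 3.7 of \cite{V14} for a $p$-dimensional subset intersected with a sphere --- one gets $\Wb(\AKb)\le 2\sqrt{p}$, hence $\Wb(\AKb)^2\le 4p$; this also covers the degenerate situation $\|\vx^*\|_1<R$, where $\K=\Real^p$. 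Second, the sparse bound, relevant when $\|\vx^*\|_1=R$: here $\K$ coincides with the descent cone of $\|\cdot\|_1$ at $\vx^*$, and a first-order (subgradient) computation shows that every $\vz\in\K$ obeys $\|\vz_{S^c}\|_1\le\|\vz_S\|_1$ with $S=\{i:x_i^*\ne 0\}$, $|S|=s$. Consequently $\|\vz\|_1\le 2\|\vz_S\|_1\le 2\sqrt{s}\,\|\vz_S\|_2\le 2\sqrt{s}\,\|\vz\|_2$, and therefore
\begin{equation*}
\Wb(\K\cap\S^{p-1})=\E\sup_{\vz\in\K\cap\S^{p-1}}\langle\vgg,\vz\rangle\le 2\sqrt{s}\,\E\|\vgg\|_\infty\le 2\sqrt{2\,s\log(2p)},
\end{equation*}
using $\E\|\vgg\|_\infty\le\sqrt{2\log(2p)}$ for a standard Gaussian vector $\vgg\in\Real^p$. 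For $p\ge 2$ this yields $\Wb(\AKb)^2\le 8\,s\log(2p)\le 36\,s\log p$.

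Putting the two estimates together gives $\Wb(\AKb)^2\le\min(36\,s\log p,\,4p)$. Substituting into \eqref{sketch_dimension_term} shows that \eqref{measurements}, possibly after enlarging the universal constant $C$, implies the hypothesis of \cref{sketch_dimension}, and the corollary follows with probability at least $1-\delta$.

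The only step with genuine content is the sparse Gaussian-width estimate --- writing the tangent cone of the $\ell_1$-ball at a boundary point as the $\ell_1$ descent cone and then controlling it via the $\ell_1$-versus-$\ell_2$ comparison together with the elementary bound on the maximum of $p$ Gaussians used above. Everything else is bookkeeping: tracking constants so the $m$ in \eqref{measurements} dominates the requirement \eqref{sketch_dimension_term}, and separating the interior case $\|\vx^*\|_1<R$ (handled by the $4p$ term, which is precisely why the $\min$ with $4p$ appears in \eqref{measurements}). One should also verify the routine points that $\vx^*$ from \eqref{uncompressed}, chosen with the fewest nonzeros among the minimizers, is an admissible choice of optimum for \cref{sketch_dimension} and genuinely has support size $s$, and that the tangent cone is unchanged under the identification \eqref{sketch-lasso}.
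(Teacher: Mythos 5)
Your proposal is correct and follows essentially the same route as the paper: reduce to \cref{sketch_dimension} via the identification \eqref{sketch-lasso}, then bound $\Wb(\AKb)$ by splitting into the interior case ($\K=\Real^p$, width $\le 2\sqrt{p}$) and the boundary case ($\K$ contained in the $\ell_1$ descent cone at $\vx^*$). The one difference is that where the paper derives the sharper cone inclusion $\langle\text{sign}(\vx^*_S),\mathbf{\Delta}_S\rangle+\|\mathbf{\Delta}_{S^c}\|_1\le 0$ and then cites the proof of Corollary 3 in \cite{PW15} for $\W(\overline{\K})\le 6\sqrt{s\log p}$, you work from the weaker consequence $\|\vz_{S^c}\|_1\le\|\vz_S\|_1$ and carry out the width computation yourself via H\"older and $\E\|\vgg\|_\infty\le\sqrt{2\log(2p)}$, obtaining $8s\log(2p)\le 36\,s\log p$ for $p\ge 2$; this is self-contained and numerically no worse. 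Your blanket claim $\Wb(\AKb)^2\le\min(36\,s\log p,4p)$ shares the same looseness as the paper's own conclusion (in the interior case only the $4p$ bound is actually established, and the paper resolves this by "picking the free parameter $R$"), so it is not a gap relative to the paper's argument.
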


We note that the number of measurement shown in \eqref{measurements} has quadratic dependence on the sparsity parameter $s$ and logarithmic factor $\log(p/s)$. Because there is a tensor structure in the testing matrix $\mPhi$, our estimation is not comparable to the sharp result of conventional sub-Gaussian measurements, where the latter case only requires $\O(s\,\log(p/s)/\eps^2)$ number of rows.

\begin{proof}[Proof of \cref{sparse recovery}]
Due to the equivalence of \eqref{compressed} and \eqref{sketch l_1} via the relationship \eqref{sketch-lasso}, we apply \cref{sketch_dimension} and obtain a bound of $m$ for successful recovery by calculating the constrained cone's $M$-complexity $M(\AKb) = M(\overline{\K}) ~(\mA = \id_p).$
We thus focus on the tangent cone based on the constraint set $\C= \{\vx\in \Real^p, ~\vert ~\|\vx\|_1\leq R\}$. Recalling the definition of $\K$ in \eqref{cone}, due to $\mathbf{\Delta} = t\, (\vx-\vx^*),$ the cone can be written as
\begin{equation*}
\K= \left\{\mathbf{\Delta}\in \Real^p~~\vert~~\|\frac{\mathbf{\Delta}}{t}+\vx^*\|_1 \leq R, ~\text{for}~t\geq 0\right\}\,.
\end{equation*}
From the definition of the $M$-complexity \eqref{M-complexity}, we apply the formula $(B.1)$ in \cite{GK20}, i.e. $\overline{\K} \cup \{{\bf 0}\}\subset \text{conv}(\sP),$ where the covering skeleton $\sP$ is a bounded $s$-sparse set $\sP:=\{\vy\in\Real^p~\vert~ \|\vy\|_0 \leq s, \|\vy\|_2 \leq 3\}.$ Furthermore, by inequalities $(B.2), (B.3)$ also in \cite{GK20}, we see the $\gamma$-functionals' bounds of $\sP$:
\begin{equation*}
\gamma_1(\sP) \leq \O\left(s\, \log(p/s)\right), \quad \gamma_2(\sP) \leq \O\left(\sqrt{s\, \log(p/s)}\right).
\end{equation*}
We then can derive an upper bound for $M(\overline{\K}):$
\begin{equation*}
M(\overline{\K}) \leq \gamma_1(\sP)+\gamma_2(\sP) \leq \O\left(s\, \log(p/s)\right).
\end{equation*}
We conclude the proof to develop the bound \eqref{measurements} by plugging the above result in \eqref{sketch_dimension_term} of \cref{sketch_dimension}.
\end{proof}

\section{Bounding the sketching size for constrained optimization}
\label{sec: 3}
In this section, we prove the sketching dimension result for constrained least squares: \cref{sketch_dimension} via \cref{sup_error}.  \cref{sup_error} gives an estimate of the supreme sketching error over an arbitrary set from the sketching matrix $\mS$.

\subsection{Analysis of tensor sub-Gaussian processes}
To bound the error \eqref{eps_distort} in \cref{sketch_dimension}, it is sufficient to bound the embedding errors for the elements in the cone $\AK.$ In this regard, we study the behavior of the (centered) distribution $\mS\vy$ with the tensor-structured design $\mS\in\Real^{m \times n_1n_2}$ \eqref{sketch}-\eqref{sketch ii} and an arbitrarily chosen vector $\vy$ from a set $\sY.$ The analysis provides the knowledge to control the embedding error and the failure probability by adjusting the number of measurements $m$, and essentially yields the sketching size estimation for convex programs. In particular, we show that the embedding error is bounded by the $M$-complexity \eqref{M-complexity} of the normalized $\sbY$ \eqref{WY} over the squared root of the sample size $m$ with high probability.

\begin{theorem}[Supreme sketching error]
\label{sup_error}
There exist universal constants $c, C>0$ for which the following holds. Let $n_1, n_2, m \in \mathbb{N}^+$ and $u \geq 64.$ For any fixed set $\sY \in \Real^{n_1n_2},$ let the sketching matrix $\mS \in \Real^{m \times n_1n_2}$ be defined in \eqref{sketch}-\eqref{sketch ii}. Then with probability at least $1-2\,\exp(-c\sqrt{m})-4\,\exp(-c\,u),$ one can achieve
\begin{equation}
\label{eqn:sup_error}
\sup_{\vy\in\sbY} \left\vert \|\mS\vy\|_2^2 - 1\right\vert \leq C\,u\,\frac{\alpha^4}{q^2}\, \frac{M(\sbY)}{\sqrt{m}}.
\end{equation}
Here, the parameters $\alpha \geq 1$ is the maximal $\psi_2$ norm of entries in $\vphi^{(1)}, \vphi^{(2)}$ and $q \in (0,1)$ is the density level, recalling the definitions in \eqref{sketch}-\eqref{sketch ii}.
\end{theorem}
The proof of \cref{sup_error} can be found in \cref{sup_error proof}. 

We stress that the failure probability term is independent of the ambient dimensions $n_1, n_2,$ which are usually large in practice. 
The error estimate \eqref{eqn:sup_error} implies that the density $q$ contributes to reducing the embedding error in a quadratic decay manner. 
However, the tensor structure in the sketching design slightly weakens the theoretical result by adding the new $M$-complexity term. Originally in the unstructured sub-Gaussian sketching setting, the $M$-complexity is substituted by the Gaussian width parameter, see Proposition 1 in \cite{PW15} for details. We show in \cref{M-complexity remark} that $M$-complexity is quantitatively bigger than Gaussian width.
We explore this numerically in \cref{sketch_size} and demonstrate that there is indeed a sketching error increase for tensor-structured sketching matrices as opposed to unstructured sketches, which validates our theory.

\subsection{Proof of \cref{sketch_dimension}}
\label{sketch_dimension proof}
\begin{proof}[Proof of \cref{sketch_dimension}]
The proof follows the ideas of Lemma 1, 2, 3 in \cite{PW15}. The next lemma relates the sketching error \eqref{eps_distort} with two specific terms $D_1$ and $D_2,$ both of which can be further bounded by the supreme embedding error in \eqref{eqn:sup_error} over the cone $\AK.$ 

\begin{lemma}{(Lemma 1 in \cite{PW15}.)}
\label{D_2/D_1}
Let $n_1, n_2, p, m\in\mathbb{N}^+$. Consider the least squares \eqref{QP} with data matrix $\mA \in \Real^{n_1n_2\times p},$ the vector $\vb\in\Real^{n_1n_2},$ the optimal solution $\vx^*\in\Real^p$ and the tangent cone $\K\subset \Real^p$ defined in \eqref{cone}. For any fixed matrix $\mS\in\Real^{m\times n_1n_2}$, define the two quantities 
\begin{equation*}
D_1 = \inf_{\vy\in \AKb} \|\mS\vy\|_2^2, \quad D_2 = \sup_{\vy\in \AKb} \left\vert\langle \frac{\mA\vx^*-\vb}{\| \mA\vx^*-\vb\|_2}, (\mS^\top\mS-\id_{n_1n_2})\,\vy \rangle\right\vert.
\end{equation*}
Then the sketched solution $\hat{\vx}$ of \eqref{SQP} satisfies 
\begin{equation}
\label{1+D_2/D_1}
\|\mA\hat{\vx}-\vb\|_2^2 \leq \left(1+\frac{2\, D_2}{D_1}\right)\,\|\mA\vx^*-\vb\|_2^2.
\end{equation}
\end{lemma}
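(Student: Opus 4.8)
The plan is to reproduce the variational argument behind Lemma 1 of \cite{PW15}. Set $\hat{\mathbf{\Delta}} := \hat{\vx}-\vx^*$; since $\hat{\vx}\in\C$, this difference lies in the tangent cone $\K$ of \eqref{cone}, so $\mA\hat{\mathbf{\Delta}}\in\AK$. First I would exploit the optimality of $\hat{\vx}$ for the sketched program \eqref{SQP}: writing $\mS\mA\hat{\vx}-\mS\vb = \mS(\mA\vx^*-\vb)+\mS\mA\hat{\mathbf{\Delta}}$, expanding the inequality $\|\mS\mA\hat{\vx}-\mS\vb\|_2^2 \le \|\mS\mA\vx^*-\mS\vb\|_2^2$, and cancelling the common term $\|\mS(\mA\vx^*-\vb)\|_2^2$ gives
\begin{equation*}
\|\mS\mA\hat{\mathbf{\Delta}}\|_2^2 \le -2\,\langle \mA\vx^*-\vb,\ \mS^\top\mS\,\mA\hat{\mathbf{\Delta}}\rangle .
\end{equation*}

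Next I would bring in the first-order optimality of $\vx^*$ for the \emph{un-sketched} problem \eqref{QP}: convexity of $\C$ yields $\langle \mA\vx^*-\vb,\ \mA(\vx-\vx^*)\rangle\ge 0$ for all $\vx\in\C$, hence $\langle \mA\vx^*-\vb,\ \mA\mathbf{\Delta}\rangle\ge 0$ for every $\mathbf{\Delta}\in\K$, in particular $\mathbf{\Delta}=\hat{\mathbf{\Delta}}$. Decomposing $\mS^\top\mS = \id_{n_1n_2}+(\mS^\top\mS-\id_{n_1n_2})$ on the right-hand side above and discarding the nonnegative contribution $2\langle\mA\vx^*-\vb,\mA\hat{\mathbf{\Delta}}\rangle$ leaves
\begin{equation*}
\|\mS\mA\hat{\mathbf{\Delta}}\|_2^2 \le 2\,\bigl|\langle \mA\vx^*-\vb,\ (\mS^\top\mS-\id_{n_1n_2})\,\mA\hat{\mathbf{\Delta}}\rangle\bigr| .
\end{equation*}
If $\mA\hat{\mathbf{\Delta}}=0$ the conclusion \eqref{1+D_2/D_1} is immediate, so I would assume $\mA\hat{\mathbf{\Delta}}\neq 0$ and normalize by setting $\vy := \mA\hat{\mathbf{\Delta}}/\|\mA\hat{\mathbf{\Delta}}\|_2\in\AKb$. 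The left side of the last display equals $\|\mA\hat{\mathbf{\Delta}}\|_2^2\,\|\mS\vy\|_2^2\ge \|\mA\hat{\mathbf{\Delta}}\|_2^2\,D_1$, and the right side equals $2\|\mA\hat{\mathbf{\Delta}}\|_2\,\|\mA\vx^*-\vb\|_2\,\bigl|\langle \tfrac{\mA\vx^*-\vb}{\|\mA\vx^*-\vb\|_2},(\mS^\top\mS-\id_{n_1n_2})\vy\rangle\bigr| \le 2\|\mA\hat{\mathbf{\Delta}}\|_2\,\|\mA\vx^*-\vb\|_2\,D_2$, directly from the definitions of $D_1$ and $D_2$. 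Cancelling one power of $\|\mA\hat{\mathbf{\Delta}}\|_2$ yields $\|\mA\hat{\mathbf{\Delta}}\|_2\le \tfrac{2D_2}{D_1}\|\mA\vx^*-\vb\|_2$, and then $\|\mA\hat{\vx}-\vb\|_2 \le \|\mA\vx^*-\vb\|_2+\|\mA\hat{\mathbf{\Delta}}\|_2$ followed by squaring produces \eqref{1+D_2/D_1}.

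This is largely bookkeeping, and I do not anticipate a genuine obstacle. The two points that deserve care are writing down the first-order optimality condition for the constrained least squares in the correct form (this is where convexity of $\C$, rather than smoothness alone, enters) and dispatching the degenerate cases $\mA\hat{\mathbf{\Delta}}=0$, $D_1=0$, and $\|\mA\vx^*-\vb\|_2=0$, in each of which \eqref{1+D_2/D_1} is either vacuous or trivially true so that no separate estimate is required.
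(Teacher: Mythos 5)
Your reconstruction is correct and is essentially the argument of Lemma~1 in \cite{PW15}, which is all the paper itself offers (it defers entirely to that reference): sketched optimality of $\hat{\vx}$, the first-order condition $\langle \mA\vx^*-\vb,\mA\hat{\mathbf{\Delta}}\rangle\ge 0$ from convexity of $\C$, the split $\mS^\top\mS=\id_{n_1n_2}+(\mS^\top\mS-\id_{n_1n_2})$, and normalization into $\AKb$ are exactly the intended steps, and your handling of the degenerate cases is fine. One caveat: what your chain of inequalities actually delivers is $\|\mA\hat{\vx}-\vb\|_2\le\bigl(1+\tfrac{2D_2}{D_1}\bigr)\|\mA\vx^*-\vb\|_2$, hence $\|\mA\hat{\vx}-\vb\|_2^2\le\bigl(1+\tfrac{2D_2}{D_1}\bigr)^2\|\mA\vx^*-\vb\|_2^2$, not the literal display \eqref{1+D_2/D_1} with an unsquared factor multiplying squared norms; that display appears to be a typo in the statement, since the paper's own downstream use in \eqref{123} squares the factor exactly as your derivation does, so your proof establishes the version of the lemma that is actually needed.
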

\begin{remark}
\label{AKB=AKS}
We note that there is a difference between the set-ups of \cref{D_2/D_1} and Lemma 1 in \cite{PW15}. \cref{D_2/D_1} considers the vector $\vy$ on the normalized set $\AKb$ while \cite{PW15} defines $\vy$ to be on the intersection of $\AK$ and the unit sphere. In fact, it can be shown that $\AKb = \AK\cap \S^{n_1n_2-1},$ hence the result of \cite{PW15} is applicable for \cref{D_2/D_1}. We refer the readers to the proof of Lemma 1 in \cite{PW15} for details.
\end{remark}

It remains to bound $D_1, D_2$ to keep the accuracy of $\hat{\vx}$ for \cref{sketch_dimension}. Respectively applying \cref{sup_error}, we obtain a lower bound for $D_1$ and an upper bound for $D_2.$ 
\begin{lemma}
\label{D_1 D_2}
There exist universal constants $c, C >0$ for which the following holds. Follow the same set-up in \cref{D_2/D_1}. Fix $u \geq 64,$ then for a sketching matrix $\mS\in\Real^{m \times n_1n_2}$ defined in \eqref{sketch}-\eqref{sketch ii} and $\theta = \alpha^4/q^2$ given in \eqref{eqn:sup_error},
\begin{enumerate}
\item
the event
\begin{equation}
\label{eqn: D_1}
D_1 = \inf_{\vy \in \AKb} \|\mS\vy\|_2^2 \geq 1-C\,\theta\,u\,\frac{M(\AKb)}{\sqrt{m}}
\end{equation}
holds with probability exceeding $1-2\,\exp(-c\sqrt{m})-4\,\exp(-c\,u);$

\item
another event 
\begin{equation}
\label{eqn: D_2}
D_2 = \sup_{\vy \in \AKb} \left\vert \langle \frac{\mA\vx^* - \vb}{\|\mA\vx^* - \vb\|_2}, (\mS^\top\mS - \id_{n_1n_2}) \vy\rangle\right\vert \leq 27\,C\,\theta\,u\, \frac{M(\AKb)}{\sqrt{m}}
\end{equation}
holds with probability exceeding $1-12\,\exp(-c\sqrt{m})-24\,\exp(-c\,u).$
\end{enumerate}
\end{lemma}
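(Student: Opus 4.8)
The plan is to derive \cref{D_1 D_2} as a direct consequence of \cref{sup_error} applied to two carefully chosen sets, combined with a union bound. The key observation is that both $D_1$ and $D_2$ can be controlled by the quantity $\sup_{\vy}\bigl|\,\|\mS\vy\|_2^2-1\,\bigr|$ over an appropriate set whose Gaussian width is comparable to $\Wb(\AKb)$.

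\textbf{Step 1: Lower bound for $D_1$.} Apply \cref{sup_error} directly with $\sbY = \AKb$ and the chosen $\omega$. Since $m \geq \frac{C_2\,\Wb(\AKb)^2}{q^2}$, the hypothesis of \cref{sup_error} is satisfied, so with probability at least $1-4\exp(-cm\omega^2 q/2)$ we have $\sup_{\vy\in\AKb}\bigl|\,\|\mS\vy\|_2^2-1\,\bigr| \leq \theta\bigl(\frac{C_1\Wb(\AKb)}{\sqrt m\, q}+\omega\bigr)$ with $\theta = \alpha^2(\smax^2+1)$. Taking the lower side of the absolute value immediately gives $\|\mS\vy\|_2^2 \geq 1-\theta\bigl(\frac{C_1\Wb(\AKb)}{\sqrt m\, q}+\omega\bigr)$ uniformly over $\vy\in\AKb$, which is exactly \eqref{eqn: D_1}.

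\textbf{Step 2: Upper bound for $D_2$ via polarization.} The term $D_2$ is a supremum of a bilinear form; I would linearize it through the polarization identity $\langle \vu, (\mS^\top\mS-\id)\vy\rangle = \tfrac12\bigl(\|\mS(\vu+\vy)\|_2^2 - \|\vu+\vy\|_2^2\bigr) - \tfrac12\bigl(\|\mS\vu\|_2^2-\|\vu\|_2^2\bigr) - \tfrac12\bigl(\|\mS\vy\|_2^2-\|\vy\|_2^2\bigr)$, where $\vu = \frac{\mA\vx^*-\vb}{\|\mA\vx^*-\vb\|_2}$ is a fixed unit vector. Each of the three pieces is an embedding-error term: the last over the set $\AKb$ (already controlled in Step 1); the middle over the single vector $\{\vu\}$ (a trivial set of Gaussian width $0$, so \cref{sup_error} gives $\bigl|\|\mS\vu\|_2^2-1\bigr|\leq\theta\omega$); and the first over the set $\{\vu+\vy : \vy\in\AKb\}$ after normalization. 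The last set needs care — $\vu+\vy$ is not a unit vector, but $\|\vu+\vy\|_2 \leq 2$ and one bounds $\bigl|\|\mS(\vu+\vy)\|_2^2-\|\vu+\vy\|_2^2\bigr| = \|\vu+\vy\|_2^2\,\bigl|\,\|\mS\,\overline{(\vu+\vy)}\|_2^2-1\,\bigr| \leq 4\cdot\theta\bigl(\frac{C_1\Wb(\cdot)}{\sqrt m\,q}+\omega\bigr)$, and the normalized set $\overline{\{\vu+\vy\}}$ has Gaussian width $\lesssim \Wb(\AKb)$ (translation by a fixed vector and rescaling by a bounded factor change the width by at most a constant, which I absorb into $C_1$). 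Summing the three contributions with the polarization coefficients $\tfrac12$ and the factor $4$ gives a bound of the form $\theta\bigl(\frac{5C_1\Wb(\AKb)}{\sqrt m\,q}+3\omega\bigr)$, matching \eqref{eqn: D_2}. The failure probability is the sum over the (at most six) invocations of \cref{sup_error}, yielding $24\exp(-cm\omega^2 q/2)$ after bounding $4+4+\cdots$ by $24$.

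\textbf{Main obstacle.} The routine parts are the polarization bookkeeping and the probability union bound; the one genuinely delicate point is verifying that the Gaussian width of the normalized translated set $\overline{\{\vu+\vy:\vy\in\AKb\}}$ is bounded by a constant multiple of $\Wb(\AKb)$. This requires a short argument: for $\vy\in\AKb$ one has $\|\vu+\vy\|_2 \in [\,? , 2\,]$, and the lower bound could in principle be small (if $\vy \approx -\vu$), which would blow up the normalization. Here one uses that $\vu = \frac{\mA\vx^*-\vb}{\|\mA\vx^*-\vb\|_2}$ is \emph{orthogonal} to the cone $\AK$ by the optimality (normal-equation) condition for $\vx^*$ — this is precisely the geometric fact underlying Lemma 1 of \cite{PW15} — so $\|\vu+\vy\|_2^2 = 1 + \|\vy\|_2^2 = 2$ exactly for every $\vy\in\AKb$, and the rescaling factor is the harmless constant $\sqrt2$. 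With orthogonality in hand, $\Wb(\overline{\{\vu+\vy\}}) = \tfrac{1}{\sqrt2}\Wb(\{\vu+\vy\}) \leq \tfrac{1}{\sqrt2}\bigl(\Wb(\AKb)+\E|\langle\vn,\vu\rangle|\bigr) \leq \tfrac{1}{\sqrt2}(\Wb(\AKb)+1)$, and the additive constant is absorbed into $C_1$. This orthogonality check, together with pinning down exactly how the constants $5$ and $3$ emerge, is where the real work lies; everything else follows mechanically from \cref{sup_error}.
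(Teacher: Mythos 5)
Your Step~1 and your overall polarization strategy for $D_2$ are essentially the paper's proof, which follows Lemma~3 of \cite{PW15}: the paper bounds $\sup_{\vy}|\tilde{\vy}^\top\mM\vz|$ by $\tfrac12\left(\sup|(\tilde{\vy}+\vz)^\top\mM(\tilde{\vy}+\vz)|+\sup|\tilde{\vy}^\top\mM\tilde{\vy}|+|\vz^\top\mM\vz|\right)$ and invokes \cref{sup_error} three times, exactly as you propose. The genuine gap is your ``orthogonality'' claim. For a \emph{constrained} problem the first-order optimality condition yields only the variational inequality $\langle\mA\vx^*-\vb,\,\mA(\vx-\vx^*)\rangle\geq 0$ for all $\vx\in\C$, i.e.\ $\langle\vu,\vy\rangle\geq 0$ for $\vy\in\AK$ --- not $\langle\vu,\vy\rangle=0$. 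The normal equations, and hence orthogonality of the residual to $\AK$, hold only when $\C=\Real^p$. A one-dimensional counterexample: $\C=\{x\geq 1\}$, $\mA=1$, $\vb=0$ gives $\vx^*=1$, $\AK=[0,\infty)$, $\vu=1$, and $\langle\vu,\vy\rangle=\vy>0$. Consequently $\|\vu+\vy\|_2^2=2+2\langle\vu,\vy\rangle$ ranges over $[2,4]$ rather than equalling $2$, so the translated set is not a uniform rescaling and the identity $\Wb(\overline{\{\vu+\vy\}})=\tfrac{1}{\sqrt2}\Wb(\{\vu+\vy\})$ fails. What survives is exactly what you need for non-degeneracy --- the lower bound $\|\vu+\vy\|_2\geq\sqrt2$ --- and a pointwise estimate still gives $\Wb(\overline{\{\vu+\vy\}})\leq\tfrac{1}{\sqrt2}\left(\Wb(\AKb)+\E|\langle\vn,\vu\rangle|\right)$, but this portion of your argument must be rewritten. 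Note also that the additive constant $\E|\langle\vn,\vu\rangle|=\sqrt{2/\pi}$ cannot simply be ``absorbed into $C_1$'': $\Wb(\AKb)$ can be arbitrarily small (e.g.\ when $\AKb$ is a single point), so that term has to be traded against the $\omega$ term or tracked explicitly, which is part of how the constants $5$ and $3$ arise.

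A secondary difference: the paper does not invoke the optimality condition at this stage at all. Instead it splits the cone into $\mA\K_+=\{\vy:\langle\vz,\vy\rangle\geq0\}$ and $\mA\K_-=\{\vy:\langle\vz,\vy\rangle<0\}$ and runs the polarization argument separately on each half (so that $\|\tilde{\vy}\pm\vz\|_2\geq\sqrt2$ on the relevant half by construction), which is precisely the device that replaces the orthogonality you assumed; it also explains the factor $24=2\times 3\times 4$ in the failure probability, where your count of ``at most six invocations'' is ad hoc. If you repair the orthogonality step by citing the variational inequality, your single-cone version is a legitimate (slightly shorter) alternative, but as written the key geometric justification is incorrect.
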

The proof of \cref{D_1 D_2} is in \cref{proof: D_1 D_2}. 

We now prove \cref{sketch_dimension}. For fixed $\eps\in (0,1)$ and $\delta \in (0,1/2)$ preset in \cref{sketch_dimension}, suppose 
\begin{equation}
\label{m_assumption1}
m \geq \max(\frac{7^2\,\log^2(1/\delta)}{c^2}, 64^2)\geq \max(\frac{\log^2(42/\delta)}{c^2}, 64^2).
\end{equation}
We set 
\begin{equation*}
u = \max(\frac{7\,\log(1/\delta)}{c}, 64) \leq \sqrt{m}. 
\end{equation*}
So we have $u \geq 64.$ We now are eligible to apply \cref{D_1 D_2}. By plugging in the bounds of $D_1, D_2$ provided in \eqref{eqn: D_1}, \eqref{eqn: D_2} and summing up the failure probabilities, then together with \eqref{1+D_2/D_1} in \cref{D_2/D_1}, we have the accuracy estimation: 
\begin{equation}
\label{distortion_1}
\begin{array}{ll}
\|\mA\hat{\vx} - \vb\|_2^2&\displaystyle \leq \left(1 + \frac{54\,C\,\theta\,u\,\frac{M(\AKb)}{\sqrt{m}}}{1- C\,\theta\,u\,\frac{M(\AKb)}{\sqrt{m}}} \right)^2 \|\mA\vx^* - \vb\|_2^2\\
& \displaystyle \leq \left(1 + \frac{54\,C\,\theta\,\frac{7\,\log(1/\delta)}{c}\,\frac{M(\AKb)}{\sqrt{m}}}{1- C\,\theta\,\frac{7\,\log(1/\delta)}{c}\,\frac{M(\AKb)}{\sqrt{m}}} \right)^2 \|\mA\vx^* - \vb\|_2^2
\end{array}
\end{equation}
holds with probability exceeding 
\[
1 - 14\, \exp(-c\, \sqrt{m}) - 28\, \exp(-c\, u) \geq 1- 42\,\exp(-c\, u) \geq 1 - \delta.
\]
The above inequalities are derived by $u \leq \sqrt{m}$ and \eqref{m_assumption1}. 

It suffices to further assume that 
\begin{equation}
\label{m_assumption2}
m \geq \max\left(\frac{55^2\, C^2}{c^2}\, \theta^2\,\frac{7^2\,\log^2(1/\delta)\,M(\AKb)^2}{\eps^2}, \frac{7^2\,\log^2(1/\delta)}{c^2}, 64^2\right).
\end{equation}
Then by \eqref{distortion_1}, one can achieve
\begin{equation*}
\|\mA\hat{\vx} - \vb\|_2^2\displaystyle \leq  \left(1+\frac{\frac{54\,\eps}{55}}{1-\frac{\eps}{55}}\right)^2  \|\mA\vx^* - \vb\|_2^2\leq  \left(1+\eps\right)^2  \|\mA\vx^* - \vb\|_2^2
\end{equation*}
with probability at least $1-\delta.$

We define a finite constant 
\begin{equation}
\label{C}
\tilde{C} = \max\left( \frac{55^2\cdot7^2\,C^2}{c^2}\, \theta^2, \frac{7^2}{c^2}\right),
\end{equation}
where $\theta = \alpha^4/q^2 >1$ given in \eqref{eqn:sup_error} is a finite number and only depends on the distribution of the sketching matrix $\mS$. Then the assumption \eqref{m_assumption2} on $m$ can be simplified as follows
\begin{equation*}
m \geq \displaystyle \max\left(\tilde{C}\,\frac{\log^2(1/\delta)\,M(\AKb)^2}{\eps^2},  64^2\right),
\end{equation*} 
which ensures the sketched solution $\hat{\vx}$ achieves \eqref{eps_distort} with probability exceeding $1-\delta.$ We replace $\tilde{C}$ with the new notation $C$ and conclude the proof of \cref{sketch_dimension}. 
\end{proof}

\subsection{Proof of \cref{D_1 D_2}}
\label{proof: D_1 D_2}
\begin{enumerate}
\item
\begin{proof}[Estimation of $D_1$]
By letting $\sY = \AK,$ \cref{sup_error} implies that with probability at least $1-2\,\exp(-c\sqrt{m}) - 4\,\exp(-c\,u),$
\begin{equation*}
1-\inf_{\vy\in\AKb} \|\mS\vy\|_2^2 \leq \sup_{\vy\in\AKb} \left\vert \|\mS\vy\|_2^2-1\right\vert \leq C\,u\,\frac{\alpha^4}{q^2}\, \frac{M(\AKb)}{\sqrt{m}}.
\end{equation*}
We complete the proof for $D_1$ by adjusting the above inequality to \eqref{eqn: D_1}.
\end{proof}

\item
\begin{proof}[Estimation of $D_2$]
The proof follows the same logic as Lemma 3 in \cite{PW15}. 
We use short-hand notations: $\mC = \mS^\top\mS - \id_{n_1n_2}\in\Real^{n_1n_2\times n_1n_2}$ and $\vz = \frac{\mA\vx^*-\vb}{\|\mA\vx^*-\vb\|_2} \in \S^{n_1n_2-1}$. Consider two subsets of $\AKb\subset\S^{n_1n_2-1}:$
\begin{equation}
\label{AK_part}
 \AKb_+ = \{\vy\in \AKb~~\vert~~ \langle \vz, \vy \rangle \geq 0\}, \quad  \AKb_- = \{\vy\in \AKb~~\vert~~ \langle \vz, \vy \rangle < 0\}.
\end{equation}
Then
\begin{equation}
\label{D_2_part}
D_2 = \sup_{\vy\in \AKb} \left\vert\vy^\top \mC\vz \right\vert = \max\left(\sup_{\vy\in \AKb_+}|\vy^\top\mC\vz|, ~~ \sup_{\vy\in \AKb_-}|\vy^\top\mC\vz|\right).
\end{equation}
In the case of $\sup_{\vy\in \AKb+}|\vy^\top\mC\vz|,$ we show its partition
\begin{equation}
\label{+_part}
\sup_{\vy\in \AKb_+}|\vy^\top\mC\vz| \leq \frac{1}{2}\, \left( \sup_{\vy\in \AKb_+}\vert(\vy+\vz)^\top\mC (\vy+\vz)\vert+ \sup_{\vy\in \AKb_+} \vert\vy^\top\mC\vy\vert+ \sup\vert\vz^\top\mC\vz\vert\right).
\end{equation}
We then apply \cref{sup_error} three times to respectively bound each term on the right-hand side of \eqref{+_part}. Taking a union of the failure of each term in \eqref{+_part}, we know that the following event 
\begin{equation}
\label{part_error}
\begin{array}{l}
\displaystyle\sup_{\vy\in \AKb_+}\left\vert(\frac{\vy+\vz}{\|\vy+\vz\|_2})^\top\mC \frac{\vy+\vz}{\|\vy+\vz\|_2}\right\vert\displaystyle \leq C\,u\,\frac{\alpha^4}{q^2}\, \frac{M(\overline{\AKb_+ +\vz})}{\sqrt{m}} \\\\
\displaystyle\sup_{\vy\in \AKb_+} \left\vert\vy^\top\mC\vy \right\vert \displaystyle \leq C\,u\,\frac{\alpha^4}{q^2}\, \frac{M(\AKb_+)}{\sqrt{m}} \\\\
\displaystyle\sup\left\vert\vz^\top\mC\vz \right\vert \displaystyle \leq C\,u\,\frac{\alpha^4}{q^2}\, \frac{M(\{\vz\})}{\sqrt{m}}.
\end{array}
\end{equation}
simultaneously holds with probability exceeding $1-6\,\exp(-c\sqrt{m})-12\,\exp(-c\,u).$ 

The first inequality of \eqref{part_error} implies for $\vy, \vz \in \S^{n_1n_2-1,}$ 
\begin{equation}
\label{y+z norm}
\begin{array}{ll}
\displaystyle\sup_{\vy\in \AKb_+}\vert(\vy+\vz)^\top \mC(\vy+\vz)\vert &\displaystyle\leq \left(\sup_{\vy\in \AKb_+} \|\vy+\vz\|_2^2\right)\,C\,u\,\frac{\alpha^4}{q^2}\, \frac{M(\overline{\AKb_+ +\vz})}{\sqrt{m}} \\
&\displaystyle\leq 4\,C\,u\,\frac{\alpha^4}{q^2}\, \frac{M(\overline{\AKb_+ +\vz})}{\sqrt{m}}.
\end{array}
\end{equation}

To develop a bound for $\sup_{\vy\in \AKb_+}|\vy^\top\mC\vz|$ by \eqref{+_part}, \eqref{part_error}, in the remaining proofs, we aim to estimate $M(\overline{\AKb_+ +\vz})$ and $M(\{\vz\})$ by the term $M(\AKb_+).$ 

We first claim that $M(\overline{\AKb_+ +\vz})$ is no greater than $M(\AKb_+)$ up to a constant. 
\begin{lemma}
\label{lemma: M(AK+z)}
\begin{equation}
\label{M(AK+z)}
M(\overline{\AKb_++\vz}) \leq 13\, M(\AKb_+). %~\kc{remove overline}
\end{equation}
\end{lemma}
The proof of \cref{lemma: M(AK+z)} is in \cref{proof: M(AK+z)}.

For a single point set $\{\vz\}\subset \S^{n_1 n_2-1}$, it can be shown that
\begin{equation}
\label{M(z)} 
M(\{\vz\}) \leq 1 \leq M(\AKb_+).
\end{equation}

Combine the results of \eqref{+_part} - \eqref{M(z)}, we know the event
\begin{equation}
\begin{array}{ll}
\displaystyle \sup_{\vy\in\AKb_+}\vert\vy^\top\mC\vz\vert& \leq \displaystyle \frac{1}{2}\,\left(4\,C\,u\,\frac{\alpha^4}{q^2} \,\frac{M(\overline{\AKb_++\vz})}{\sqrt{m}}+C\,u\,\frac{\alpha^4}{q^2} \,\frac{M(\AKb_+)}{\sqrt{m}}+C\,u\,\frac{\alpha^4}{q^2} \,\frac{M(\{\vz\})}{\sqrt{m}}\right)\\
& \displaystyle \leq \frac{1}{2}\,(4\times 13+1+1)\,C\,u\,\frac{\alpha^4}{q^2} \,\frac{M(\AKb_+)}{\sqrt{m}} \leq 27\,C\,u\,\frac{\alpha^4}{q^2} \,\frac{M(\AKb)}{\sqrt{m}}
\end{array}
\end{equation}
holds with probability at least $1-6\,\exp(-c\sqrt{m})-12\,\exp(-cu)$.

We can establish the same bound for $\sup_{\vy\in \AKb_-}|\vy^\top\mM\vz|$ following a similar argument for $\sup_{\vy\in \AKb_+}|\vy^\top\mM\vz|.$ Taking a union of the failure of each term in \eqref{D_2_part} being well controlled, we can conclude the proof by with probability exceeding $1-12\,\exp(-c\sqrt{ m}) -24 \,\exp(-c\, u),$
\begin{equation*}
D_2 = \sup_{\vy\in \AKb} \left\vert\vy^\top \mC\vz \right\vert \leq 27\,C\,u\,\frac{\alpha^4}{q^2}\,\frac{M(\AKb)}{\sqrt{m}}.
\end{equation*}
The proof for $D_2$ is finished.
\end{proof}
\end{enumerate}

\section{Bounding the sketching error} 
\label{sec: 4}
In \cref{sec: 4}, we present the proofs of the JL property \cref{JL} and the supreme sketching error result \cref{sup_error}. Our proofs boil down to quantifying the sketching error for the tensor-structured sketching matrices. 
\subsection{Concentration properties} 
Recalling the definition of $\mS \in \Real^{m \times n_1n_2}$ in \eqref{sketch}-\eqref{sketch ii}, for a fixed vector $\vy\in\Real^{n_1n_2}$, the target sketching error $\|\mS\vy\|_2^2 - \|\vy\|_2^2$ can be expressed as:
\begin{equation}
\label{eqn: err express}
\|\mS\vy\|_2^2-\|\vy\|_2^2 = \frac{1}{m}\, \sum_{k=1}^m \langle \veta_k\otimes \bxi_k, \vy \rangle^2 - \E\, \langle \veta_k\otimes \bxi_k, \vy \rangle^2.
\end{equation}
In the aim of evaluating the quadratic error term \eqref{eqn: err express}, in \cref{linear}, we first lay out basic concentration properties of the linear sketching $\langle \veta \otimes \bxi, \vy\rangle$, where $\veta \otimes \bxi$ is a single i.i.d. tensorized sub-Gaussian process in $\mS$. 
Because of the tensor product structure in this random form, \cref{linear} shows the concentration of $\langle \veta \otimes \bxi, \vy\rangle$ exhibits the sub-exponential behavior which is defined as follows.
\begin{definition}
\label{psi_1}
The sub-exponential norm of a random variable $x\in\Real$, denoted by $\|x\|_{\psi_1}$, is defined as:
\[
\|x\|_{\psi_1} = \inf\{t>0: \quad \E \,\exp(x/t) \leq 2\}.
\]
A random variable is called sub-exponential if it has a bounded $\psi_1$ norm.
\end{definition}
We present the exact result of \cref{linear}.
\begin{proposition}
\label{linear}
 There exist universal constants $c, C>0$ for which the following holds. Let $n_1, n_2 \in \mathbb{N}^+$ and $q\in(0,1]$. For any vector $\vy \in \sbY\subset \S^{n_1n_2-1},$ draw independent vectors $\veta \in \Real^{n_1}, \bxi\in\Real^{n_2}$ defined in \eqref{sketch ii} constructed with density level $q$. Then the random variable $\langle \veta \otimes \bxi, \vy\rangle$ has the following properties:
\begin{equation}
\E \,\langle \veta \otimes \bxi, \vy\rangle = 0, ~~ \E \,\langle \veta \otimes \bxi, \vy\rangle^2 = 1, ~~ \|\langle \veta \otimes \bxi, \vy\rangle\|_{\psi_1} \leq C\, \frac{\alpha^2}{q}.
\end{equation}
Moreover, for any $t>0$, it satisfies 
\begin{equation}
\label{linear concentration}
\pr\left(\vert\langle \veta \otimes \bxi, \vy\rangle\vert \geq t\right) \leq 2\,\exp\left(-c\, \min \left(\frac{t^2}{\frac{\alpha^4}{q^2}\,\|\vy\|_2^2}, \frac{t}{\frac{\alpha^2}{q}\,\|\vy\|_2}\right)\right).
\end{equation}
Here, $\alpha \geq 1$ is the maximal $\psi_2$ norm of $\vphi^{(1)}, \vphi^{(2)}$, recalling definition in \eqref{sketch ii}.
\end{proposition}
%From \cref{linear}, we know that the linear form $\langle \veta \otimes \bxi, \vy\rangle$ is a sub-exponential random variable. 
We next focus on generalizing the concentration analysis to the quadratic form $\|\mS\vy\|_2^2 - \|\vy\|_2^2$ \eqref{eqn: err express}. However, the sub-exponential property of the linear form $\langle \veta \otimes \bxi, \vy\rangle$ would bring more difficulties for studying the quadratic concentration. 

\subsection{Proof of \cref{JL}}
In this subsection, we prove the JL property of the tensorized sub-Gaussian sketches: \cref{JL}.
\label{JL proof}
\begin{proof}[Proof of \cref{JL}]
Without the loss of generality, we assume the subset $\sY$ is on the unit sphere $\S^{n_1n_2-1}$. For any fixed vector $\vy\in\sY,$ since $\langle \veta_k\otimes \bxi_k, \vy \rangle$ for $k \in [m]$ are independent copies of $ \langle \veta\otimes \bxi, \vy \rangle,$ by \cref{linear},
\begin{equation*}
\E \,\langle \veta_k\otimes \bxi_k, \vy \rangle = 0, ~~\E\, \langle \veta_k\otimes \bxi_k, \vy \rangle^2 = 1, ~~\|\langle \veta_k\otimes \bxi_k, \vy \rangle\|_{\psi_1} \leq C\, \frac{\alpha^2}{q}.
\end{equation*}

To estimate the sketching error \eqref{eqn: err express}, we introduce the following concentration inequality for quadratic forms involving sub-exponential random variables from \cite{GSS21}.
\begin{proposition}[Proposition 1.1, $\alpha=1$ case in \cite{GSS21}]
\label{quadratic}
There exists a universal constant $c>0$ for which the following holds. Let $\mA\in\Real^{m \times m}$ be a symmetric matrix. Suppose $x_1, \dots, x_k,\dots, x_m$ be independent random variables satisfying $\E \,x_k = 0, \E \,x_k^2 = \sigma_k^2, \|x_k\|_{\psi_1} \leq K,$ for $k \in [m]$. For any $t>0,$ we have 
\begin{equation}
\pr\left(\left\vert\sum_{k,\ell=1}^m a_{k,\ell} \,x_k\,x_\ell - \sum_{k=1}^m \sigma_k^2 a_{k,k}\right\vert\geq t \right) \leq 2\, \exp\left(-c\, \min \left(\left(\frac{t}{K^2\,\|\mA\|_2}\right)^{1/2}, \frac{t^2}{K^4\,\|\mA\|_F^2}\right)\right).
\end{equation}
\end{proposition}
We apply \cref{quadratic} by setting 
\begin{equation*}
\mA = \frac{1}{m} \, \id_m \in \Real^{m \times m}, ~~x_k = \langle \veta_k\otimes \bxi_k, \vy \rangle, ~~\sigma_k = 1, ~~K =  C\, \frac{\alpha^2}{q},~~ t =\eps.
\end{equation*}
Then we have
\begin{equation*}
\|\mA\|_2 = \frac{1}{m},~~\|\mA\|_F = \frac{1}{\sqrt{m}},
\end{equation*}
and further by \eqref{eqn: err express}, for a fixed $\vy\in\sY$,
\begin{equation*}
\begin{array}{ll}
\displaystyle\pr\left(\left\vert \|\mS\vy\|_2^2-\|\vy\|_2^2   \right\vert \geq\eps\right)
& \displaystyle=  \pr\left(\left\vert \sum_{k=1}^m\frac{x_k^2}{m} -\sum_{k=1}^m\frac{1}{m}\right\vert\geq \eps\right)\\
& \displaystyle\leq 2\, \exp\left(-c\, \min\left(\left(\frac{\eps}{C^2\,\frac{\alpha^4}{q^2}\,\frac{1}{m}}\right)^{1/2}, \frac{\eps^2}{C^4\,\frac{\alpha^8}{q^4}\,\frac{1}{m}}\right)\right)\\
& \displaystyle\leq 2\,\exp\left(-c\,\min\left(\left(\frac{m\,\eps}{C^2\,\frac{\alpha^4}{q^2}}\right)^{1/2}, \frac{m\,\eps^2}{C^4\,\frac{\alpha^8}{q^4}} \right)\right).
\end{array}
\end{equation*}
Taking a union bound of the above event for all vectors $\vy\in\sY,$ we have
\begin{equation}
\label{union bound 1}
\pr\left(\left\vert \|\mS\vy\|_2^2-\|\vy\|_2^2   \right\vert \geq\eps, ~~\exists \,\vy\in \sY\right)\leq 2\,|\sY|\,\exp\left(-c\,\min\left(\left(\frac{m\,\eps}{C^2\,\frac{\alpha^4}{q^2}}\right)^{1/2}, \frac{m\,\eps^2}{C^4\,\frac{\alpha^8}{q^4}} \right)\right).
\end{equation}

Given the embedding dimension condition $m$ in \eqref{jl m}: 
\begin{equation}
\label{jl m ii}
m \geq C'\, \max\left(\frac{\log^2(2\,|\sY|/\delta)}{\eps}, \frac{\log(2\,|\sY|/\delta)}{\eps^2}\right),
\end{equation}
\footnote{We replace constant $C$ in the original formula with $C'$ to avoid confusion.} where we we set the constant $C'$ to satisfy
\begin{equation}
\label{C jl}
C' \geq \max\left(\frac{C^2\,\frac{\alpha^4}{q^2}}{c^2}, \frac{C^4\, \frac{\alpha^8}{q^4}}{c}\right).
\end{equation}
Combining the results in \eqref{union bound 1}, \eqref{jl m ii} and \eqref{C jl}, we obtain 
\begin{equation*}
\begin{array}{ll}
\pr\left(\left\vert \|\mS\vy\|_2^2-\|\vy\|_2^2   \right\vert \geq\eps, ~~\exists \,\vy\in \sY\right)&\displaystyle  \leq 2\,|\sY|\,\exp\left(-\min\left(c\,\frac{\sqrt{C'}\,\log(2|\sY|/\delta)}{C\,\frac{\alpha^2}{q}}, c\,\frac{C'\,\log(2|\sY|/\delta)}{C^4\,\frac{\alpha^8}{q^4}}\right) \right)\\
& \leq 2\,|\sY|\, \exp(-\log(2|\sY|/\delta)) = \delta.
\end{array}
\end{equation*}
The proof of \cref{JL} is complete.
\end{proof}

\subsection{Proof of \cref{sup_error}}
\label{sup_error proof}
In this subsection, we prove \cref{sup_error} and demonstrate how to bound the supreme sketching error of the tensor-structured sketching matrix $\mS$ over an arbitrary set.

\begin{proof}[Proof of \cref{sup_error}]
As shown in \cref{linear} that the linear form of our sketching has a sub-exponential concentration property, the proof of \cref{sup_error} for bounding the quadratic sketching error draws on a multiplier form estimation in regard sub-exponential random variables from \cite{GK20}. We present this useful result in \cref{multiplier}. In essence, we utilize \cref{multiplier} and show that the $M$-complexity (\cref{M def}) fits to bound the tensorized sub-Gaussian processes approximation error. 

To begin with, we define a generic Bernstein inequality that characterizes sub-exponential concentrations. 
\begin{definition}[Generic Bernstein concentration \cite{GK20}]
\label{Bernstein}
Let $\vx \in \Real^n$ be a random vector and $\|\cdot\|_{\text{g}}, \|\cdot\|_{\text{e}}$ be two semi-norms on $\Real^n$. We say $\vx$ satisfies generic Bernstein concentration if for every $\vy \in \Real^n$ and every $t \geq 0,$ we have 
\begin{equation}
\pr\left(|\langle \vx, \vy\rangle| \geq t\right) \leq 2\,\exp\left(-\min \left(\frac{t^2}{\|\vy\|^2_{\text{g}},} \frac{t}{\|\vy\|_{\text{e}}}\right)\right).
\end{equation}
\end{definition}
The following \cref{multiplier} states that the $M$-complexity is tailored for a multiplier estimation of random vectors satisfying certain generic Bernstein condition. 
\begin{proposition}[Proposition 5.15 in \cite{GK20}]
\label{multiplier}
There exist universal constants $c, C>0$ for which the following holds. Let $\sbY \subset \S^{n-1}$ and $(\vx, w) \in \Real^{n}\times \Real$ be a random pair such that $\|w\|_{\psi_1} \leq K$ and $\vx$ satisfies generic Bernstein concentration with respect to $(\|\cdot\|_{\text{g}}, \|\cdot\|_{\text{e}})$.  For every $u\geq64,$ suppose $(\vx_1, w_1), \dots, (\vx_m, w_m)$ are independent copies of $(\vx, w).$ Then the following holds true with probability at least $1 - 2\,\exp(-c\, \sqrt{m}) - 4\,\exp(-c\, u),$
\begin{equation}
\sup_{\vy\in\sbY}\left\vert \frac{1}{m}\,\sum_{k=1}^m w_k\, \langle \vx_k, \vy \rangle - \E\,w\,\langle \vx, \vy \rangle \right\vert \leq C\, u\,K\, \frac{M^{\text{(g,e)}}(\sbY)}{\sqrt{m}}.
\end{equation}
\end{proposition}

To transform the notations of \cref{multiplier} to our setting, we set $n = n_1n_2,$ the random pair $(\vx, w) \in \Real^{n_1n_2} \times \Real$ to be
\begin{equation}
\label{x,w}
\vx = \veta\otimes \bxi, \quad w = \langle \vx, \vy\rangle = \langle \veta\otimes \bxi, \vy \rangle.
\end{equation}
From \cref{linear}, we know  
\begin{equation}
\label{K}
\E\,w\,\langle \vx, \vy \rangle = \E\,\langle \veta\otimes \bxi, \vy \rangle^2 = 1, \quad \|w\|_{\psi_1} = \|\langle \veta\otimes \bxi, \vy \rangle\|_{\psi_1} \leq K=\frac{\alpha^2}{q},
\end{equation}
and $\vx = \veta \otimes \bxi$ exhibits generic Bernstein concentration w.r.t. $(\|\cdot\|_{\text{g}}, \|\cdot\|_{\text{e}}) = (\frac{\alpha^2}{\sqrt{c}\,q}\,\|\cdot\|_2, \frac{\alpha^2}{c\,q}\,\|\cdot\|_2)$ based on \cref{Bernstein}.

Hence, for $u \geq 64$, we are eligible to apply \cref{multiplier} with the parameters relations set in \eqref{x,w}, \eqref{K}. We can obtain that, with probability exceeding $1 - 2\,\exp(-c\, \sqrt{m}) - 4\,\exp(-c\, u),$
\begin{equation*}
\begin{array}{ll}
\displaystyle \sup_{\vy\in\sbY} \left\vert \|\mS\vy\|_2^2 - 1\right\vert  &\displaystyle= \sup_{\vy\in\sbY}\left\vert \frac{1}{m}\,\sum_{k=1}^m w_k\, \langle \vx_k, \vy \rangle - \E\, w\,\langle \vx, \vy \rangle \right\vert  \\
& \displaystyle\leq C\, u\,\frac{\alpha^2}{q} \,\frac{M^{\text{(g,e)}}(\sbY)}{\sqrt{m}} \leq \frac{C}{\min(\sqrt{c}, c)}\, u\,\frac{\alpha^4}{q^2} \,\frac{M(\sbY)}{\sqrt{m}}.
\end{array}
\end{equation*} 
For the last equality, it can be easily shown that $M^{\text{(g,e)}}$ has a linear relationship with $M^{(2,2)} \equiv M,$ i.e. $M^{\text{(g,e)}}(\sbY) \leq \frac{\alpha^2}{\min(\sqrt{c}, c)\, q}\, M(\sbY)$ for semi-norms $(\|\cdot\|_{\text{g}}, \|\cdot\|_{\text{e}}) = (\frac{\alpha^2}{\sqrt{c}}\,q\,\|\cdot\|_2, \frac{\alpha^2}{c\,q}\,\|\cdot\|_2)$.
 
We replace $\frac{C}{\min(\sqrt{c}, c)}$ in the above inequality with a new notation $C$ and conclude the proof of \cref{sup_error}.
\end{proof}

\subsection{Proof of \cref{linear}}
\label{proof:linear}
Let us first lay out some auxiliary results. 
\begin{definition}
For $p \geq 1$ and a random variable $x$,  the $L_p$ norm is defined as 
\begin{equation}
\label{lp}
\|x\|_p = \left(\E |x|^p \right)^{1/p}.
\end{equation}
\end{definition}

\begin{lemma}[Exercise 6.3.5 in \cite{V18}]
\label{symmetrization}
Let $F: \Real^+ \to \Real$ be an increasing, convex function and $x_1, \dots, x_n$ be independent, zero-mean random variables in a normed space, then
\begin{equation}
\E \,F\left(\left\|\sum_{i=1}^n x_i\right\|\right) \leq \E\, F\left(2\,\left\|\sum_{i=1}^n r_i\, x_i\right\|\right),
\end{equation}
where $r_1, \dots, r_n \in \{-1,1\}$ are i.i.d. Rademacher variables. 
\end{lemma}

\begin{lemma}[Lemma 5.4, $k=1$ case in \cite{AW13}]
\label{subgaussian_to_gaussian}
There exists a universal constant $C>0$ for which the following holds. For any $p \geq 2,$ and $a_1, \dots, a_n \in \Real,$ if $x_1, \dots, x_n$ are independent symmetric random variables with $\|x_i\|_{\psi_2} \leq \beta,$ then 
\begin{equation}
\left\|\sum_{i=1}^n a_i\,x_i\right\|_p \leq C\,\beta\, \left\|\sum_{i=1}^n a_i\, g_i\right\|_p,
\end{equation}
where $g_1, \dots, g_n \in \Real$ are i.i.d. $\nor(0,1)$ variables. 
\end{lemma}

\begin{lemma}[Theorem 1, $d=2$ case in \cite{L06}]
\label{gaussian bound}
There exists a universal constant $C>0$ for which the following holds. For any $p \geq 2$ and any vector $\vy \in \Real^{n_1n_2},$ draw independent standard normal vectors $\vg_1 \in \Real^{n_1}$ and $\vg_2 \in \Real^{n_2}.$ Then 
\begin{equation}
\label{eqn: gaussian bound}
\|\langle \vg_1 \otimes \vg_2, \vy \rangle \|_p \leq C\, (\sqrt{p}\, \|\mY\|_F+p\, \|\mY\|_2),
\end{equation}
where the matrix $\mY \in \Real^{n_1 \times n_2}$ is a reshaped form of the vector $\vy\in\Real^{n_1n_2},$ i.e. $Y_{i_1, i_2} = y_{i_1(n_1 -1)+i_2}$ for $i_1 \in [n_1], i_2 \in [n_2].$
\end{lemma}

We now prove \cref{linear}. 
\begin{proof}[Proof of \cref{linear}]
By definitions of $\veta$ and $\bxi$ in \eqref{sketch ii}, their entries are independent, mean-zero and unit-variant random variables. Thus the first and second order moments of $\langle \veta\otimes \bxi, \vy\rangle$ are
\begin{equation*}
\E\, \langle \veta\otimes \bxi, \vy\rangle = \E \,\veta^\top\mY\bxi = \E \sum_{i_1=1}^{n_1} \left(\sum_{i_2=1}^{n_2}Y_{i_1, i_2} \xi_{i_2} \right)\, \eta_{i_1} = 0.
\end{equation*}

\begin{equation*}
\begin{array}{ll}
\E\, \langle \veta\otimes \bxi, \vy\rangle^2 &\displaystyle= \E \sum_{i_1,j_1=1}^{n_1} \sum_{i_2, j_2=1}^{n_2}(Y_{i_1, i_2} \xi_{i_2} \eta_{i_1} )\,(Y_{j_1, j_2} \xi_{j_2} \eta_{j_1} )\\\\
&\displaystyle=\E \sum_{i_1=1}^{n_1} \sum_{i_2=1}^{n_2}Y_{i_1, i_2}^2 \xi_{i_2}^2 \eta_{i_1}^2 = \sum_{i_1=1}^{n_1} \sum_{i_2=1}^{n_2}Y_{i_1, i_2}^2 = \|\vy\|_2^2 = 1.
\end{array}
\end{equation*}
Here, the matrix $\mY \in \Real^{n_1 \times n_2}$ is a reshaped form of the vector $\vy\in\Real^{n_1n_2}.$

For any $p \geq 2,$ let the convex, increasing function $F(|x|) = |x|^p,$ thus $(\E\,F(|x|) )^{1/p}= \|x\|_p$. We apply the symmetrization inequality \cref{symmetrization} twice, respectively to transform $\veta$ and $\bxi$ into symmetric random vectors,
\begin{equation}
\label{symmetrization1}
\begin{array}{ll}
\|\langle \veta \otimes \bxi, \vy\rangle\|_p &\displaystyle= \|\veta^\top\mY\bxi\|_p = \left\|\sum_{i_2=1}^{n_2}\sum_{i_1=1}^{n_1}Y_{i_1, i_2} \eta_{i_1}\xi_{i_2} \right\|_p\\
&\displaystyle\leq  2\cdot 2 \,\left\| \sum_{i_2=1}^{n_2}\sum_{i_1=1}^{n_1} Y_{i_1, i_2}\left((r_1)_{i_1} \eta_{i_1} \right)\,\left((r_2)_{i_2} \xi_{i_2} \right)\right\|_p\\
&\displaystyle = 4\, \left\|(\vr_1 \circ \veta)^\top\, \mY\, (\vr_2 \circ \bxi) \right\|_p = 4\, \left\|\langle (\vr_1 \circ \veta)\otimes (\vr_2 \circ \bxi), \vy\rangle \right\|_p,
\end{array}
\end{equation}
where $\vr_1 = [(r_1)_1, \dots, (r_1)_{n_1}]^\top \in \{-1,1\}^{n_1}$ and $\vr_2 = [(r_2)_1, \dots, (r_2)_{n_2}]^\top \in \{-1,1\}^{n_2}$ are independent Rademacher vectors.

Since the entries of $\veta, \bxi$ satisfy $\|\eta_{i_1}\|_{\psi_2}, \|\xi_{i_2}\|_{\psi_2} \leq \alpha/\sqrt{q}$, recalling definition in \eqref{sketch ii}, by Exercise 6.3.6 in \cite{V18}, the entries in $\vr_1 \circ \veta$ and $\vr_2 \circ \bxi$ have bounded $\psi_2$ norm by $\alpha/\sqrt{q}$ up to a constant $C$.  Also because the entries of $\vr_1 \circ \veta$ and $\vr_2 \circ \bxi$ are independent and symmetric, we apply \cref{subgaussian_to_gaussian} twice by setting $\beta = C\alpha/\sqrt{q}$ and transform $\vr_1 \circ \veta$ and $\vr_2 \circ \bxi$ to Gaussian vectors,
\begin{equation}
\label{subgaussian_to_gaussian2}
\begin{array}{ll}
\displaystyle\left\|\langle (\vr_1 \circ \veta) \otimes (\vr_2 \circ \bxi), \vy\rangle \right\|_p
&\displaystyle= \left\|\sum_{i_2=1}^{n_2}\sum_{i_1=1}^{n_1}Y_{i_1,i_2}\left((r_1)_{i_1}\xi_{i_1}\right)\, \left((r_2)_{i_2}\eta_{i_2}\right)\right\|_p\\
&\displaystyle\leq C^2\, \frac{\alpha^2}{q}\,\left\|\sum_{i_2=1}^{n_2}\sum_{i_1=1}^{n_1}Y_{i_1,i_2}(g_1)_{i_1} (g_2)_{i_2}\right\|_p
\displaystyle = C^2\, \frac{\alpha^2}{q}\,\left\|\langle \vg_1\otimes \vg_2,\vy\rangle\right\|_p.
\end{array}
\end{equation}

Combining the results of \eqref{symmetrization1},\eqref{subgaussian_to_gaussian2} and \eqref{eqn: gaussian bound} in \cref{gaussian bound}, we have 
\begin{equation}
\label{lp bound}
\begin{array}{ll}
 \|\langle \veta \otimes \bxi, \vy\rangle\|_p &\leq 4\, \left\|\langle (\vr_1 \circ \veta)\otimes (\vr_2 \circ \bxi), \vy\rangle \right\|_p \\
&\displaystyle \leq 4C^2\, \frac{\alpha^2}{q}\,\left\|\langle \vg_1\otimes \vg_2,\vy\rangle\right\|_p \\
&\displaystyle\leq C\, \frac{\alpha^2}{q}\,(\sqrt{p}\, \|\mY\|_F+p\, \|\mY\|_2).
\end{array}
\end{equation}

The above term can be further bounded by $C\,\frac{\alpha^2}{q}\,p$ for $p \geq 2$ because $\|\mY\|_2\leq \|\mY\|_F =\|\vy\|_2 =1$. By Proposition 2.7.1 of \cite{V18}, it implies there exists a universal constant $C>0$ such that
\begin{equation*}
\E\, \exp\left(\frac{| \langle \veta \otimes \bxi, \vy\rangle|}{C\,\frac{\alpha^2}{q}\,p}\right) \leq 2,
\end{equation*}
which is equivalent to 
$\|\langle \veta \otimes \bxi, \vy\rangle\|_{\psi_1} \leq C\, \frac{\alpha^2}{q}
$
by \cref{psi_1}.

In order to prove $\langle \veta \otimes \bxi, \vy\rangle$ satisfies the concentration \eqref{linear concentration}, we rewrite the result of \eqref{lp bound},
\[
 \|\langle \veta \otimes \bxi, \vy\rangle\|_p \leq \max \left(C\, \frac{\alpha^2}{q}\, \|\mY\|_F\,\sqrt{p} ,\quad C\, \frac{\alpha^2}{q}\, \|\mY\|_2\,p \right).
\]
Based on Proposition 2.5.2 and 2.7.1 from \cite{V18}, the above two bounds in max function imply there exists a universal constant $c>0$ such that
\begin{equation*}
\begin{array}{ll}
\pr\left(\vert \langle \veta \otimes \bxi, \vy\rangle\vert \geq t \right)& \leq \displaystyle 2\, \exp\left( - c\,\min\left( \frac{t^2}{ \frac{\alpha^4}{q^2}\,\|\mY\|_F^2},\frac{t}{ \frac{\alpha^2}{q}\,\|\mY\|_2} \right)\right)\\
&\displaystyle \leq 2\, \exp\left( - c\,\min\left( \frac{t^2}{ \frac{\alpha^4}{q^2}\,\|\vy\|_2^2},\frac{t}{ \frac{\alpha^2}{q}\,\|\vy\|_2} \right)\right).
\end{array}
\end{equation*}

The proof of \cref{linear} is complete.
\end{proof}

\section{Numerical experiments}
\label{sec: numerics}
In this section, we show the numerical performance for the row-wise tensor sketching matrix $\mS\in\Real^{m \times n_1n_2}$ defined in \eqref{sketch}-\eqref{sketch ii}. Specifically, we choose a common case of $\mS$ to be: Gaussian+Rademacher (G+R), meaning the tensor component $\vphi^{(1)}\in\Real^{n_1}$ in \eqref{sketch ii} follows the standard normal distribution $\nor(\mathbf{0}, \id_{n_1}),$ while the other $\vphi^{(2)}\in\{-1,1\}^{n_2}$ is a Rademacher vector whose entries are i.i.d. and take the values $1$ and $-1$ with equal probability $1/2.$

We implement the sketch $\mS$ on the unconstrained linear regression
\begin{equation}
\label{unconstrained_numerics}
\vx^* = \arg\min_{\vx\in\Real^p}\|\mA\vx-\vb\|_2^2.
\end{equation}
We focus on three types of linear regressions where the data matrix $\mA\in\Real^{n_1n_2\times p}$ has different properties: 
\begin{enumerate}
\item {\bf{Well-conditioned program.}}\\
In this setting, the data matrix $\mA$ has a relatively small condition number.

\item {\textbf{Ill-conditioned program.}}\\
For the ill-conditioned problem, we build the condition number of $\mA$ to be large, for instance, $10^4.$

\item {\bf{Structured program.}}\\
As we mentioned in \cref{design}, in the structured program, the columns of the matrix $\mA$ admit tensor structure:
\begin{equation}
\label{structured}
\va_j = \vf_j\otimes \vg_j, \quad\text{for~} j \in [p].
\end{equation}
\end{enumerate}

In terms of building the input system, we generate the matrix $\mA$ based on the singular value decomposition:
\begin{equation*}
\underset{n_1n_2\times p}{\mA}\quad = \underset{n_1n_2\times p}{\mU}\quad\underset{p\times p}{\mSig}\quad\underset{p \times p}{\mV^\top},
\end{equation*}
where the left and right factors $\mU, \mV$ are built from the orthogonal matrices $\mQ$ in QR decompositions of standard Gaussian matrices, whose entries are i.i.d. and normally distributed with zero mean and unit variance.
For a well-conditioned matrix $\mA$ with a small condition number, the singular values are drawn from normal distribution $\nor(1, 0.04)$ and thus are centered around $1$ with high probability. For a ill-conditioned matrix $\mA,$ we set the singular values to be 
$$\displaystyle \mSig_{j,j} = 10^{(-4)\, \frac{j-1}{p-1}}, \quad \text{for~} j \in [p].$$
The condition number of $\mA$ is thus $\mSig_{1,1}/\mSig_{p,p} = 10^4.$ 
For a structured matrix $\mA,$ we instead form two factor matrices $\mF = [\vf_1, \dots, \vf_p] \in \Real^{n_1 \times p}$ and $\mG = [\vg_1, \dots, \vg_p] \in \Real^{n_2 \times p}$ respectively, following the way of generating a well-conditioned matrix and then construct $\mA$ by the rule \eqref{structured}.
To form the input vector $\vb\in\Real^{n_1n_2},$ first we generate a reference vector $\vx_{\text{ref}}\in\Real^p$ following the distribution $\nor(\mathbf{1},0.25\, \id_p)$ and a small noise vector following $\nor(\mathbf{0},10^{-2}\, \id_{n_1n_2}).$ We build the data vector $\vb$ as
\[
\vb = \mA\vx_{\text{ref}}+\text{noise}.
\]

In the numerical experiments, we perform the sketching construction $\mS$ on the aforementioned three types of linear regressions. 
We then compare the performance of $\mS$ with that of the standard Gaussian matrix. We denote $\vx^*\in\Real^{p}$ as \eqref{unconstrained_numerics}, and $\hat{\vx}\in\Real^{p}$ the sketched solution
$
\hat{\vx} = \arg\min_{\vx\in\Real^p}\|\mS\mA\vx-\mS\vb\|_2^2.
$
To measure the quality of a sketched solution $\hat{\vx}$, we define
\begin{equation}
\label{error ratio}
\text{error~ratio} = \left\vert\frac{\|\mA\hat{\vx}-\vb\|_2^2-\|\mA\vx^*-\vb\|_2^2}{\|\mA\vx^*-\vb\|_2^2}\right\vert
\end{equation}
For all numerical tests below, we calculate the error ratio as the average of $100$ independent simulations. 

In \cref{sketch_size} and \cref{rank}, we plot the error ratios respectively depending on different choices of the parameters: the sketching dimension $m$ and the number of unknown variables $p.$ We show the numerical results of different candidates for the proposed tensor-structured sketching design in \cref{GR}. The plots are all in the log-log scale.

\subsection{Dependence on the sketching size $m$}
It is of interest to test how the sketching dimension $m$ affects the quality of our sketching matrix $\mS.$ \cref{sketch_size} presents the empirical performance of $\mS$ as the sketching dimension $m$ varies. We summarize the numerical observations as follows: 
\begin{enumerate}
\item
In the small error regime (under the scale between $10^{-1}$ and $10^{-2}$), the roughly parallel lines validate the theory that the G+R sketch has the same performance as that of the standard Gaussian sketch and the sketching dimension $m$ keeps a quadratic dependence on $\eps$.

\item
In contrast, there are rapid decays in the large error ratio regime for the G+R sketching in both well-conditioned and ill-conditioned programs. Such phenomena corresponds to the quantitative shift in our sketching dimension bound shown in \cref{unconstrained}. Specifically, the error ratio decays at the order $\mathcal{O}(1/m)$ when the sketching size $m$ is small but gradually drops at a slower rate $\mathcal{O}(1/\sqrt{m})$ as $m$ grows.  

\item
Unlike its performance in the unstructured programs, the G+R sketching does not have a drastic decay in the structured program. This may suggest a better estimate of the sketching dimension for tensor-structured programs.
\end{enumerate}

\begin{figure}[!ht]
\centering
\includegraphics[width=0.55\linewidth]{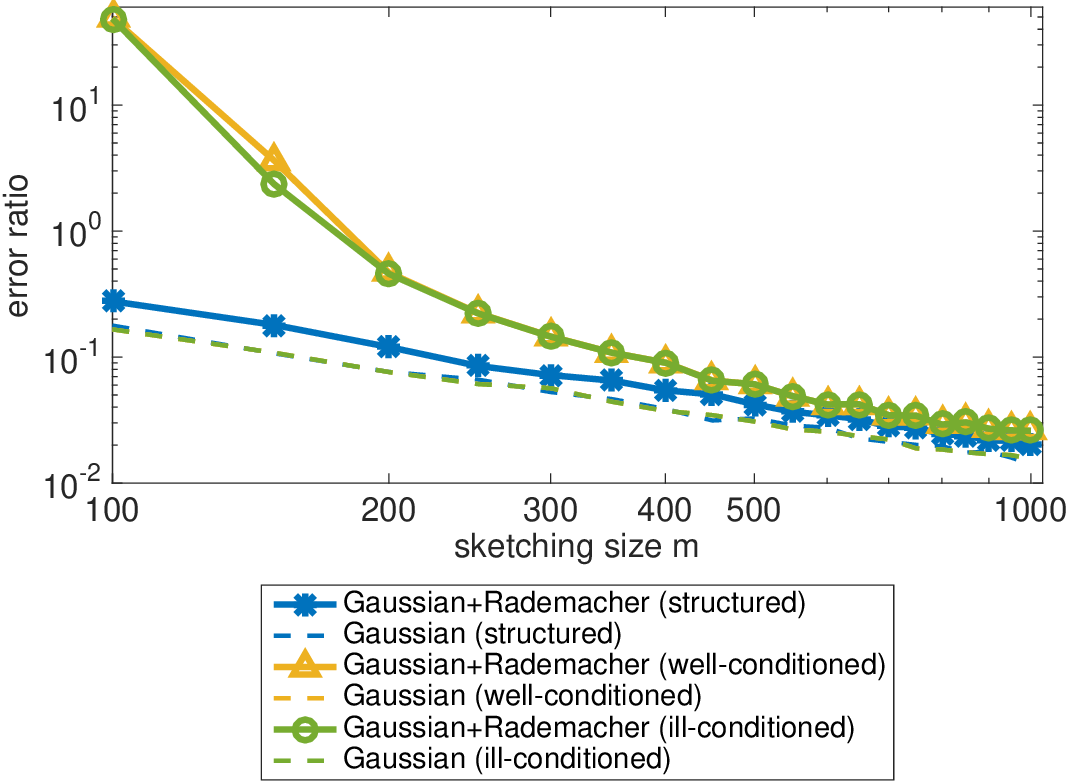}
\caption{We plot in solid curves the error ratios averaged over 100 trials of the G+R sketching with density $q=0.2$, respectively on the well-conditioned, ill-conditioned and structured linear regressions with varying sketching size $m$. In comparison, the performance of unstructured Gaussian sketching matrix is plotted in dash lines on the same three types of regression problems for reference. The size of the regression problems is set as $n_1n_2 = 64^2$ by $p=15$. }
\label{sketch_size}
\end{figure}
  
\subsection{Dependence on the number of unknowns $p$}
The matrix $\mA\in\Real^{n_1n_2\times p}$ is full rank with high probability due to our construction method, thus the number of unknowns $p=\text{rank}(\mA)$ considering $n_1,n_2\geq p$. We show in \cref{rank} the performance of the sketching strategy as the number of unknowns $p$ changes.
\begin{enumerate}
\item
When the error ratios are relatively small, i.e. under the scale $10^{-1},$ the G+R construction for all three types of programs is shown to have the same dependence as the standard Gaussian matrix on $p,$ or equivalently $\text{rank}(\mA),$ given that the slopes are rather similar. This observation is important and provides numerical support for our result \cref{unconstrained}, that the tensor-structured sketching maintains the same optimal dependence on $\text{rank}(\mA)$ as conventional Gaussian matrix in unconstrained linear regressions in the small error setting.

\item
Larger errors are observed in the solid curves for well and ill-conditioned programs in the large $p$ regime. This is consistent with our unconstrained linear regression theory \cref{unconstrained} that the error ratio $\eps$ may change the optimal dependence on $\text{rank}(\mA)$ to a quadratic dependence when the sketching dimension $m$ is fixed.
\end{enumerate}

\begin{figure}[!ht]
\centering
\includegraphics[width=0.55\linewidth]{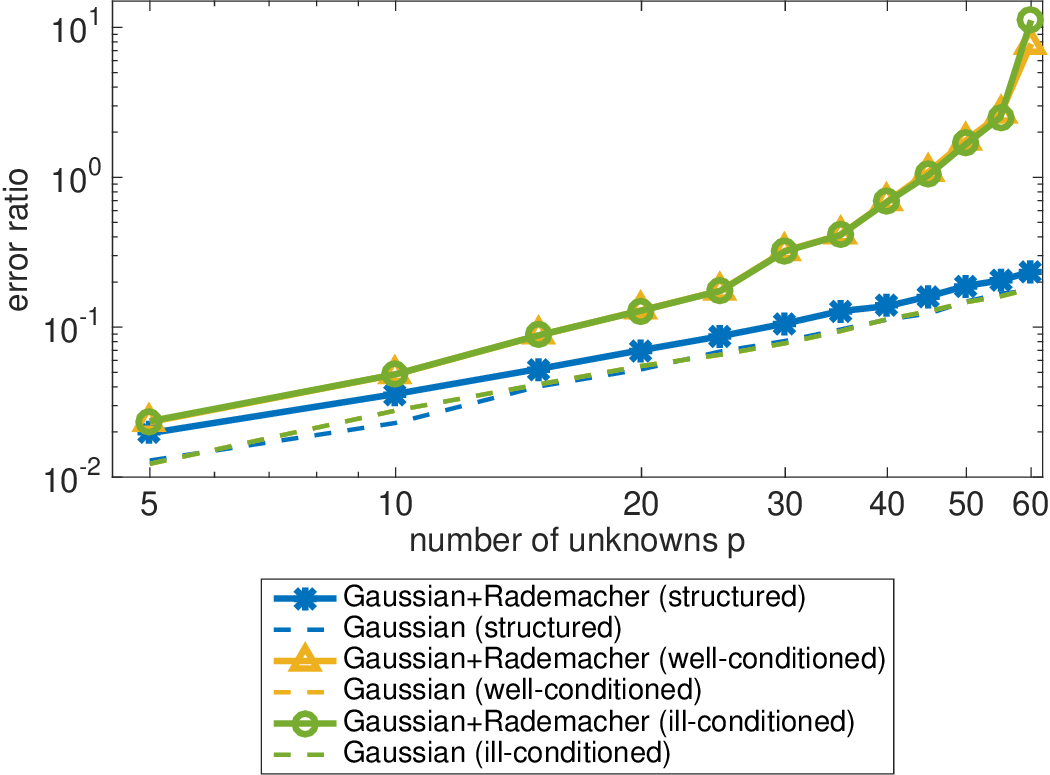}
\caption{We plot the sketching error ratios with varying number of unknown variables $p$ with fixed ambient dimension $n_1n_2=64^2$ for three different types of linear regression programs. We fix the sketching dimension $m=400$. We compare the performances of the G+R sketching constructed with density $q=0.2$ and standard Gaussian sketching.} 
\label{rank}
\end{figure}

\subsection{Performance of different sub-Gaussian distributions}
\label{BN}
In the construction of sketching matrix $\mS$ \eqref{sketch}, we assume each row is a tensor product of sub-Gaussian random vectors. So far we have employed the sketching design G+R in the numerical experiments above. In this section, we investigate other choices of sub-Gaussian distributions including Rademacher+Rademacher (R+R) and uniform $[-\sqrt{3},\sqrt{3}]$ \footnote{The matrix is constructed by independent entries following the uniform $[-\sqrt{3},\sqrt{3}]$ distribution. Such construction satisfies the condition for  $\mS$ \eqref{sketch ii} as the uniform $[-\sqrt{3},\sqrt{3}]$ distributed random variables have zero mean and unit variance.} + uniform $[-\sqrt{3},\sqrt{3}]$ (U+U) and Gaussian+Gaussian (G+G). In \cref{GR}, for the well-conditioned linear regression only, we test three sketching strategies. All three curves seem to be parallel to that of unstructured sketching matrices, though with different constant intercepts. We think this difference is caused by the factor of maximal $\psi_2$ norm $\alpha$ that is implicitly contained in the constant $C$ of \eqref{unconstrained m} in \cref{unconstrained}. In fact, the parameter $\alpha$ has different values approximately as $1.20,1.34$ and $1.63$ for Rademacher, uniform $[-\sqrt{3},\sqrt{3}]$ and Gaussian random variables respectively. Among all three choices, the R+R design turns out to have the best performance as it has the smallest factor value. 

\begin{figure}[!ht]
\centering 
\includegraphics[width=0.55\linewidth]{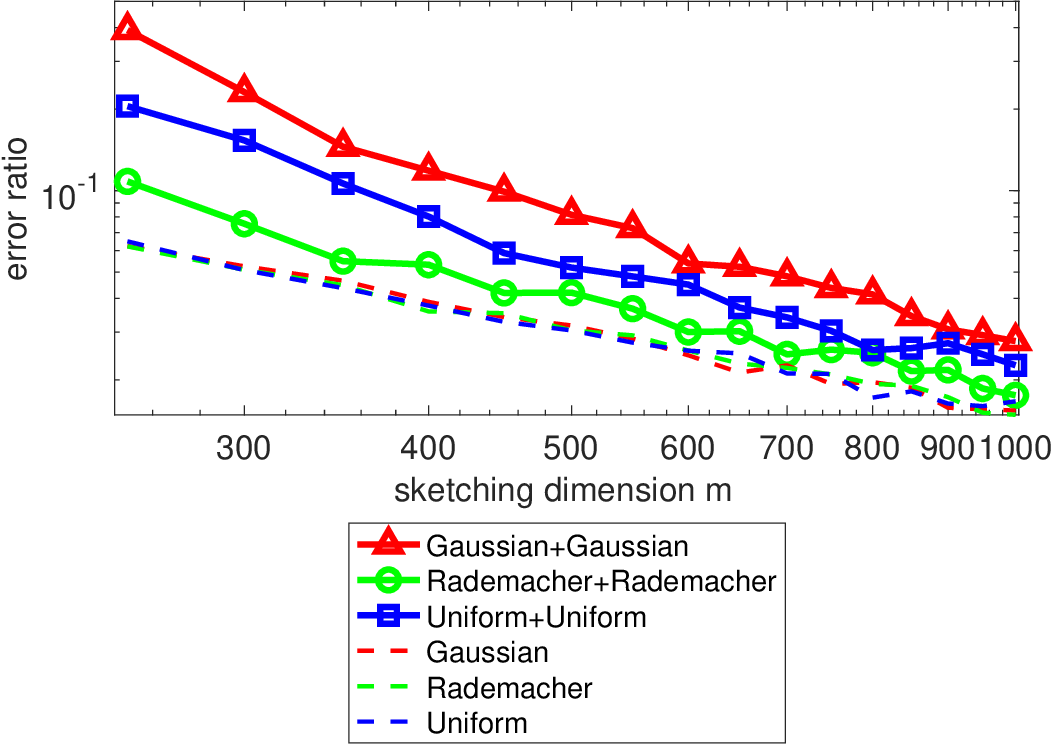}
\caption{We compare the sketching performances of the G+G, R+R and U+U sketches with varying sketching size $m$ on a well-conditioned linear regression problem of size $n_1n_2 = 64^2$ by $p=15.$ All three types of random sketches are constructed with density level $q=0.2.$ In addition, we implement the standard Gaussian, Rademacher and uniform $[-\sqrt{3},\sqrt{3}]$ matrices on the same program for reference.} 
\label{GR}
\end{figure}

\section{Conclusions and future work}
\label{sec: conclusion}
In this paper, motived by structured least squares in practical applications, we presented a row-wise tensor-structured sketching design to accelerate the solving procedure. 
For unconstrained linear regressions, we provided a sharp sketching dimension bound that is derived from the optimal JL property of the proposed sketch. We have also demonstrated a state-of-the-art sketching estimate regarding constrained least squares problems. The result involves the calculation of a $M$-complexity parameter and directly applies to common types of optimizations in different geometry landscapes.
In light of the theoretical support to the main result, we developed the analysis for the maximum embedding error of the sketching matrix over a fixed set, with the help of the generic chaining technique.
Our theories are then verified by numerical simulations on various choices of sketching matrices and parameter situations. 

Future work remains, firstly, considering the suboptimal estimate led by the newly introduced $M$-complexity compared to the classical Gaussian width, we shall seek for a more accessible complexity term to offer a better characterization for sketching constrained optimizations.
Secondly, the sketching model in this work considers only the sub-Gaussian random variables, an interesting open problem would be analyzing the sketching property for the randomized FFT-related construction with tensor structure, as such class of sketch enables fast matrix-vector multiplications.
The last open problem is inspired by the numerical observations shown in \cref{sketch_size} and \cref{rank}, that the proposed sketching design works better on the tensor-structured programs than unstructured programs. This may suggest that a tighter sketching size estimate could be derived for tensor-structured programs.

\section*{Acknowledgements}
The authors would like to thank Linjian Ma for pointing out a mistake in an old manuscript. We also thank Rachel Ward and Joe Neeman for their insights and helpful discussions. Chen is supported by the Office of Naval Research (award N00014-18-1-2354) and by the National Science Foundation (awards 1952735 and 1620472). Jin is supported by the grant NSF HDR-1934932. We also thank the anonymous referees for their careful reading of the manuscript and many insightful comments.  

\appendix
\section{Proof of \cref{lemma: M(AK+z)}}
\label{proof: M(AK+z)}
\begin{proof}[Proof of \cref{lemma: M(AK+z)}]
From \cref{M def}, let $\sT^*$ be an almost optimal skeleton of $\AKb_+ \cup  \{{\bf 0}\}$ such that
\begin{equation}
\label{optimal skeleton}
\gamma_1(\sT^*)+\gamma_2(\sT^*) \leq \frac{13}{12}\,M(\AKb_+).
\end{equation}

We then show that $\sT^* + \vz \cup \{{\bf 0}\}$ is a skeleton for $\overline{\AKb_++\vz} \cup  \{{\bf 0}\}$.
\begin{claim}
\label{claim 1}
\begin{equation}
\label{skeleton}
\overline{\AKb_++\vz} \cup \{{\bf 0}\} \subset \conv(\sT^* + \vz \cup \{{\bf 0}\} ).
\end{equation}
\end{claim}

\begin{proof}[Proof of \cref{claim 1}]
For any $\vy \in \overline{\AKb_++\vz},$ there exists an element $\vy'$ in $\AKb_+$ such that $\vy = (\vy'+\vz)/\|\vy'+\vz\|_2.$ Since $\vy' \in \AKb_+ \subset \conv(\sT^*),$ suppose $\vy'$ has the expression:
$
\vy' = \sum_{i=1}^I \alpha_i {\bf{t}}_i,
$
where $\{{\bf{t}}_i\}_{i \in [I]} \subset \sT^*, \alpha_i \geq 0, \sum_{i=1}^I \alpha_i = 1$. Then we have a convex construction for $\vy$ from the set $\sT^*+\vz \cup \{\bf 0\}:$
\[
\vy = \frac{\vy'+\vz}{\|\vy'+\vz\|_2} = \sum_{i=1}^I \frac{\alpha_i}{\|\vy'+\vz\|_2} ({\bf{t}}_i+\vz) + (1- \frac{1}{\|\vy'+\vz\|_2}) \, \bf 0,
\]
where the components $\{{\bf{t}}_i + \vz\}_{i \in [I]}, {\bf 0}$ belong to $\sT^*+\vz \cup \{\bf 0\}$ and positive weights $(\sum_{i=1}^I \frac{\alpha_i}{\|\vy'+\vz\|_2})+(1- \frac{1}{\|\vy'+\vz\|_2}) = 1$. Note that $\|\vy'+\vz\|_2 = \sqrt{\|\vy'\|_2^2+\|\vz\|_2^2 + 2\, \langle \vy',\vz\rangle} \geq \sqrt{2}>1,$ so $1/\|\vy'+\vz\|_2 < 1.$

The proof of \cref{claim 1} is complete.
\end{proof} 
Based on \eqref{M-complexity}, we then compute $M(\overline{\AKb_++\vz})$ by the $\gamma$-functionals of its skeleton: $\sT^*+\vz \cup \{\bf 0\}.$ 

Next, we aim to bound $\gamma(\sT^*+\vz \cup \{\bf 0\})$ by $\gamma(\sT^*)$ up to a constant. See the following claim.
\begin{claim}
\label{claim 2}
\[
\gamma_\alpha(\sT^*+\vz \cup \{{\bf 0}\}) \leq 12\, \gamma_\alpha(\sT^*), ~~\text{for}~~\alpha = 1, 2.
\]
\end{claim}

\begin{proof}[Proof of \cref{claim 2}]
By the sub-additivity of $\gamma$-functionals as proved in Lemma 2.1 from \cite{VL19}, we know
\begin{equation}
\label{additivity} 
\begin{array}{ll}
\gamma_\alpha(\sT^*+\vz \cup \{{\bf 0}\})& \leq 3\, (\gamma_\alpha(\sT^*+\vz) + \gamma_\alpha(\{{\bf 0}\}) + \diam(\sT^*+\vz \cup \{{\bf 0}\}))\\
& \leq 3\, (\gamma_\alpha(\sT^*) + \diam(\sT^*+\vz \cup\{{\bf 0}\})),
\end{array}
\end{equation}
due to the fact that $\gamma$-functionals are translation-invariant and $\gamma_\alpha(\{{\bf 0}\}) = 0.$

To evaluate $\diam(\sT^*+\vz \cup \{{\bf 0}\})),$ we first have
\[
\diam(\sT^*+\vz \cup \{{\bf 0}\})) \leq \diam(\sT^*+\vz)+\text{dist}({\bf 0}, \sT^*+\vz ) .
\]
As diameter is translation-invariant, 
\[
 \diam(\sT^*+\vz) =\diam(\sT^*).
 \]
Also, 
\begin{equation*}
\begin{array}{ll}
\text{dist}({\bf 0}, \sT^*+\vz ) &= \text{dist}({\bf -z}, \sT^* ) \\
& \leq  \text{dist}(-\vz,{\bf 0})+\text{dist}({\bf 0} ,\sT^*)   
 =  \text{dist}({\bf 0}, \conv(\sT^*))+\|\vz\|_2  \\
&\leq \text{dist}({\bf 0}, \AKb_+)+\|\vz\|_2 = 2, \quad \text{since}~\AKb_+\subset  \conv(\sT^*).
\end{array}
\end{equation*}
Hence,
\begin{equation}
\label{diam T+z 0}
\diam(\sT^*+\vz \cup \{{\bf 0}\})) \leq \diam(\sT^*) + 2 \leq 3\, \diam(\sT^*).
\end{equation}
$\diam(\sT^*)\geq 1$ is because $\diam(\sT^*)= \diam(\conv(\sT^*)) \geq \diam(\AKb_+ \cup \{{\bf 0}\}) =1.$ 

Finally, we combine the results of \eqref{additivity} and \eqref{diam T+z 0} and obtain
\begin{equation*}
\label{gamma_alpha}
\begin{array}{ll}
\gamma_\alpha(\sT^*+\vz \cup \{{\bf 0}\}) &\leq 3\, (\gamma_\alpha(\sT^*) + \diam(\sT^*+\vz \cup\{{\bf 0}\}))\\
& \leq 3\, (\gamma_\alpha(\sT^*) + 3\,\diam(\sT^*)) \leq 12\, \gamma_\alpha(\sT^*).
\end{array}
\end{equation*}
Note that a set's diameter is always less than its $\gamma$-functionals.

The proof of \cref{claim 2} is complete.
\end{proof}

Therefore, by \cref{claim 1}, \cref{claim 2} and \eqref{optimal skeleton}, we have
\[
M(\overline{\AKb_++\vz}) \leq (\gamma_1+\gamma_2)(\sT^*+\vz \cup \{{\bf 0}\}) \leq 12\, (\gamma_1+\gamma_2)(\sT^*) = 13\, M(\AKb_+). 
\]
The entire proof for \cref{lemma: M(AK+z)} is complete.
\end{proof}

\bibliographystyle{siamplain}
\bibliography{references}

\end{document}